\begin{document}

\title{The action spectrum and $C^0$ symplectic topology}
\date{\today}
\author{Lev Buhovsky, Vincent Humili\`ere, Sobhan Seyfaddini}
\maketitle

\begin{abstract} 
  Our first main result states that the spectral norm $\gamma $ on $ \Ham(M, \omega) $, introduced  in the works of Viterbo, Schwarz and Oh, is continuous with respect to the $C^0$ topology, when $M$ is  symplectically aspherical. This statement was previously proven only  in the case of closed surfaces. As a corollary, using a recent result of Kislev-Shelukhin, we obtain $C^0$ continuity of barcodes on aspherical symplectic manifolds, and furthermore define barcodes for Hamiltonian homeomorphisms.  We also present several applications to Hofer geometry and dynamics of Hamiltonian homeomorphisms.

Our second main result is related to the Arnold conjecture about fixed points of Hamiltonian diffeomorphisms. The recent example of a Hamiltonian {\it homeomorphism} on any closed symplectic manifold of dimension greater than $ 2 $ having only one fixed point,  shows that the conjecture does not admit a direct generalization to the $ C^0 $ setting. However, in this paper we demonstrate that a reformulation of the conjecture in terms of fixed points as well as spectral invariants still holds for Hamiltonian homeomorphisms  on symplectically aspherical manifolds.
 \end{abstract}

\setcounter{tocdepth}{2}
\tableofcontents


\section{Introduction}
 As a consequence of Floer's proof of  the Arnold conjecture, we now know that, on fairly general closed and connected symplectic manifolds, a Hamiltonian diffeomorphism must necessarily possess a large number of fixed points.  On the other hand,  Hamiltonian \emph{homeomorphisms}, which are defined as those homeomorphisms obtained as uniform limits of Hamiltonian diffeomorphisms\footnote{We point out that the definition of Hamiltonian homeomorphisms that we adopt in this paper agrees with the one of Le Calvez \cite{lecalvez06}, but differs from that of M\"uller and Oh \cite{muller-oh} which is more restrictive.}, behave quite differently:    We recently proved in \cite{BHS} that every closed and connected symplectic manifold of dimension at least four admits Hamiltonian homeomorphisms with just a single fixed point.   This surprising behaviour is a higher dimensional phenomenon as it is well known that a Hamiltonian homeomorphism has at least two fixed points on the sphere and three on surfaces of higher genus. This fact was proven by Matsumoto \cite{matsumoto}; see also \cite{Franks, lecalvez05}.

 This article addresses the question of $C^0$ continuity of certain symplectic invariants, usually referred to as spectral invariants,  which are extracted from the action spectrum of a Hamiltonian diffeomorphism via Floer theory.  Questions of this nature were first raised by Viterbo in \cite{viterbo} where he proved that these invariants are $C^0$ continuous in the case of $\R^{2n}$.  The case of closed manifolds was studied in \cite{Sey13} where it is shown that such invariants are $C^0$ continuous on surfaces.\footnote{To be more precise, it is proven in \cite{Sey13} that the spectral norm is $C^0$ continuous on surfaces.}   The arguments given in \cite{Sey13} rely heavily on two dimensional fragmentation techniques which do not generalize to higher dimensions.  In light of the aforementioned counterexample to the Arnold conjecture, one might expect the results on  $C^0$ continuity of spectral invariants not to extend beyond surfaces.  However, we will see in this article that although $C^0$ rigidity of fixed points fails in  dimensions four and above, the results of \cite{Sey13} do generalize to these dimensions.  In fact, we will show that the entire action spectrum is $C^0$ continuous in a very precise sense which will be explained below.
 
 As a consequence of the above results we will be able to define spectral invariants for arbitrary Hamiltonian  homeomorphisms.  This in turn allows us to present a generalization of the Arnold conjecture which continues to hold for Hamiltonian homeomorphisms:  We prove that, in spite of the counter-example from \cite{BHS}, the cup length estimate of the homological version of the Arnold conjecture survives in the $C^0$ setting if we include in the count the total number of spectral invariants.

\subsection{$C^0$-continuity of the action spectrum}
Let  $(M,\omega)$ be a closed and connected symplectic manifold and denote by $\Ham(M, \omega)$ and  $\overline{\Ham}(M, \omega)$ the groups of Hamiltonian diffeomorphisms and homeomorphisms of $(M, \omega)$, respectively; see Section \ref{sec:hamiltonian_homeos} for definitions.

Spectral invariants are \emph{homologically essential} values of the action functional which are defined in the spirit of \emph{min-max} critical value selectors from Lusternik-Schnirelmann theory.  In this paper, we consider the case of symplectically aspherical manifolds, \emph{i.e.}\ symplectic manifolds satisfying the condition $\omega|_{\pi_2(M)} = 0  = c_1|_{\pi_2(M)}$. It is known that spectral invariants are particularly well-behaved under this assumption.  Given a Hamiltonian $H: \S^1 \times M \rightarrow \R$  and  $a \in H_*(M) \setminus \{0\}$, the spectral invariant $c(a, H)$ is, roughly speaking, defined to be the action value at which the homology class $a$ appears in the Hamiltonian Floer homology of $H$; see Section \ref{sec:spec_inv} for a detailed definition.

Let $a, b \in H_*(M)$ be non-zero homology classes.  For any $\phi \in \Ham(M, \omega)$ we define the difference of spectral invariants $\gamma(a, b; \phi):= c(a, H) -c(b, H)$ where $H$ is any Hamiltonian the time--1 map of whose flow is $\phi$.  It is well-known that this difference of spectral invariants  does not depend on the choice of $H$ and so it is well-defined; see Section \ref{sec:spec_inv}.  In the specific case where $a = [M], b = [pt]$, the function $\gamma([M], [pt] ; \cdot): \Ham(M, \omega) \rightarrow \R$ induces a non-degenerate norm on $\Ham(M, \omega)$ which is referred to as the spectral norm and is simply  denoted by $\gamma(\cdot)$. In \cite[Question 5.13]{Oh10}, Oh asked whether $\gamma : \Ham(M, \omega) \rightarrow \R$ is continuous with respect to the $C^0$ topology.  Over the past decade, with the expansion  of $C^0$ symplectic topology, this question has received much attention (see \cite{Oh10, muller-oh, Sey12, Sey13, Sey13b, dore-hanlon})  and has only been answered in the case of surfaces in \cite{Sey13}.  Our main result settles this question for any closed, connected and symplectically aspherical manifold.

\begin{theo}\label{theo:gamma} Let $(M, \omega)$ be closed, connected, and symplectically aspherical.  For any $a, b \in H_*(M) \setminus \{0\}$, the difference of spectral invariants $\gamma(a,b; \cdot): \Ham(M, \omega) \rightarrow \R$ is continuous with respect to the $C^0$ topology on $\Ham(M, \omega)$ and extends continuously to $\overline{\Ham}(M, \omega)$.

In particular, the $\gamma$ norm is $C^0$ continuous and extends continuously to $\overline{\Ham}(M, \omega)$.
  \end{theo}

  \begin{remark}\label{remark:non-aspherical}  The question of whether the spectral norm on $\Ham(M,\omega)$ is $C^0$-continuous on general symplectic manifolds remains open. Besides the case of symplectically aspherical manifolds established here, this continuity was very recently proved on $\C\mathbb{P}^n$ by Shelukhin \cite{Shelukhin-Viterbo-conj} (see also \cite{Kawamoto}) and on negative monotone manifolds by Kawamoto \cite{Kawamoto}.

    On general (not necessarily aspherical) closed connected symplectic manifolds, the numbers $c(a,H)-c(b,H)$ may not only depend on the time one map $\phi_H^1$. We can think of $\gamma(a,b;\cdot)$ as a map on the space $\mathcal{P}\Ham(M,\omega)$ of smooth paths starting at the identity in $\Ham(M,\omega)$. Our proof of Theorem \ref{theo:gamma} then adapts easily to show the following statement:

    \medskip
    \emph{For all quantum homology classes $a,b\in QH_*(M)\setminus\{0\}$, the difference of spectral invariants $\gamma(a,b; \cdot): \mathcal{P}\Ham(M, \omega) \rightarrow \R$ is continuous with respect to the $C^0$-topology on $\mathcal{P}\Ham(M, \omega)$ and extends continuously to $\overline{\mathcal{P}Ham}(M, \omega)$.}
    
\medskip
 Here, of course, the $C^0$-topology on the space $\mathcal{P}\Ham(M,\omega)$ is induced by the distance $d((\phi_t)_{t\in[0,1]},(\psi_t)_{t\in[0,1]})=\sup_{t\in[0,1]} d_{C^0}(\phi^t, \psi^t)$. See Remark \ref{remark:proof-non-asperical} for more details.   
  \end{remark}

  As we will now explain, it is not only the (differences between) spectral invariants which are $C^0$ continuous, but in fact, using the theory of barcodes one can make sense of $C^0$ continuity of the entire action spectrum.  

\medskip

\noindent \textbf{Barcodes:}  A barcode $\mathcal{B}= \{I_j\}_{1 \leq j \leq N}$ is a finite collection of intervals (or bars) $I_j = (a_j, b_j]$, $a_j \in \mathbb{R}$, $b_j \in \mathbb{R}\cup \{ +\infty\}$.  The space of barcodes can be equipped with the so-called bottleneck distance which will be denoted by $\dB$; see e.g. \cite{Oudot-book}.  

  Using Hamiltonian Floer homology one can associate a canonical barcode $\mathcal{B}(H)$ to every Hamiltonian $ H $; see \cite{PS14, UZ}. The barcode $\mathcal{B}(H)$  encodes a significant amount of information about the Floer homology of $H$:  it completely characterizes the filtered Floer complex of $H$ up to quasi-isomorphism, and hence it subsumes all of the previously constructed filtered Floer theoretic invariants. For example, the spectral invariants of $H$ correspond to the endpoints of the half-infinite bars in $\mathcal{B}(H)$.  

  Given a barcode $\mathcal{B}= \{I_j\}_{1\leq j \leq N}$ and $c\in \mathbb{R}$ define $\mathcal{B} + c =   \{I_j + c\}_{1\leq j \leq N}$, where $I_j + c $ is the interval obtained by adding $c$ to the endpoints of $I_j$.  Let $\sim$ denote the equivalence relation on the space of barcodes given by  $\mathcal{B} \sim \mathcal{C}$  if  $\mathcal{C} = \mathcal{B} +c$ for some $c \in \mathbb{R}$; we will denote the quotient space by $\hatbarcodes$.     Now the bottleneck distance descends to a distance on $\hatbarcodes$ which we will continue to denote by $\dB$. If $H,G$ are two Hamiltonians the time--1 maps of whose flows coincide, then $\B(H) = \B(G)$ in $\hatbarcodes$ (this follows from (\ref{eq:invariance_floer}). 
  Hence, we obtain a map $\B : (\Ham(M, \omega), d_{C^0} ) \rightarrow (\hatbarcodes, \dB).$  
The question of continuity of the mapping $\B $ was first addressed by Le Roux, Viterbo, and the third author in \cite{LSV} where it is proven that $\B $ is continuous and extends to $\overline{Ham}(M, \omega)$ when $M$ is a surface.  Our next result states that the same is true for any closed and symplectically aspherical manifold.

\begin{corol}[Cor. 6 in \cite{kis-shel}]\label{corol:barcodes} Let $(M, \omega)$ be closed, connected, and symplectically aspherical.  The mapping $$\B : (\Ham(M, \omega), d_{C^0} ) \rightarrow (\hatbarcodes, \dB) $$ is continuous and extends continuously to $\overline{\Ham}(M, \omega)$.
\end{corol}
\begin{proof}
It has recently been proven by Kislev-Shelukhin \cite{kis-shel} that the following inequality holds for all Hamiltonian diffeomorphisms $\phi, \psi$:
\begin{equation}\label{eq:kis-shel}
\dB(\cal B(\phi),\cal B(\psi))\leq\tfrac12\gamma(\psi^{-1}\!\circ\phi).
\end{equation}
The result follows immediately from the above inequality and Theorem \ref{theo:gamma}.   
\end{proof}

We should point out that, in the above result, it is important to consider barcodes upto shift.  The map $\B$ is often defined using mean-normalized Hamiltonians. In this case, it takes values in the space of barcodes, as opposed to barcodes upto shift. However, this yields a discontinuous map (See Example 2.3 in \cite{Sey13}).

\subsection{The Arnold conjecture}
We will now explain how Theorem \ref{theo:gamma} allows us to present a generalization of the Arnold conjecture which continues to hold for Hamiltonian homeomorphisms.

   In the appendix to this paper, we will show, using standard arguments from dynamics, that a $C^0$ generic Hamiltonian homeomorphism has infinitely many fixed points.  Hence, our goal here will be to address the Arnold conjecture for all elements of $\overline{\Ham}(M, \omega)$ and not a generic subset of it.
   
The (homological) Arnold conjecture states that a Hamiltonian \emph{diffeomorphism} of a closed and connected symplectic manifold $(M, \omega)$ must have at least as many fixed points as the \emph{cup length} of $M$.  Cup length, denoted by $\cl(M)$, is a topological invariant of $M$ which is defined as follows:\footnote{Here, $\cap$ refers to the intersection product in homology. Cup length can be equivalently defined in terms of the cup product in cohomology.}
\begin{align*}\cl(M):= \max \{k+1\,:\, \exists \, a_1&, \ldots, a_k \in H_*(M),\,\,
 \forall i, \deg(a_i)\neq  \mathrm{dim}(M)\\ &\text{ and }  a_1 \cap \cdots \cap a_k \neq 0\}.
\end{align*}
This version\footnote{The original version of the Arnold conjecture, in which the lower bound for the number of fixed points is predicted to be the minimal number of critial points of a smooth function on $M$, has also been established on ashperical manifolds; see \cite{rudyak-oprea}.} of the Arnold conjecture was proven, for Hamiltonian diffeomorphisms, on $\bb CP^n$ \cite{fortune, FW}  and on symplectically aspherical manifolds  \cite{floer89, hofer, rudyak-oprea}.  We should emphasize that cuplength estimates have not been established for general  monotone symplectic manifolds and, in fact,  Floer himself tended to ``believe that there are more than technical reasons for this"; see Page 577 of \cite{floer89}.
  
  \medskip 
   It was proven by Matsumoto \cite{matsumoto} that Hamiltonian \emph{homeomorphisms} of surfaces satisfy the Arnold conjecture; see also \cite{Franks, lecalvez05}.  However, we showed in \cite{BHS} that every closed and connected symplectic manifold of dimension at least 4 admits a Hamiltonian homeomorphism with a single fixed point.  
  
  This is where Theorem \ref{theo:gamma} enters the scene: the result allows us to define the action spectrum of a Hamiltonian homeomorphism (upto a shift).  In particular, we can now make sense of \emph{the total number of spectral invariants of a Hamiltonian homeomorphism.}  The theorem below shows that, in spite of the counter-example from \cite{BHS}, the cup length estimate from the homological Arnold conjecture survives if we include in the count the total number of spectral invariants.
  
  We need the following notion before stating the result:   A subset $A\subset M$ is homologically non-trivial if for every open neighborhood $U$ of $A$ the map $i_*: H_j(U) \rightarrow H_j(M)$, induced by the inclusion $i: U \hookrightarrow M$, is non-trivial for some $j>0$. Clearly, homologically non-trivial sets are infinite.

\begin{theo}\label{theo:main}
Let $(M, \omega)$ denote a closed, connected and symplectically aspherical manifold.  
Let $\phi\in \overline{\Ham}(M, \omega)$ be a Hamiltonian homeomorphism.  If the total number of spectral invariants of $\phi$ is smaller than $\mathrm{cl}(M)$, then the set of fixed points of $\phi$ is homologically non-trivial, hence is infinite.  
\end{theo}

In the smooth case, Theorem \ref{theo:main} was established by Howard \cite{Howard2012}, and our proof is inspired by his. For a smooth Hamiltonian diffeomorphism, spectral invariants correspond to actions of certain fixed points. Therefore, Theorem \ref{theo:main} is a generalization of the Arnold conjecture in the smooth setting.  However, when it comes to Hamiltonian homeomorphisms, there is a total breakdown  in the correspondence between spectral invariants and actions of fixed points:  Indeed, the Hamiltonian homeomorphism we construct in \cite{BHS} has a single fixed point and many\footnote{The set of spectral invariants of this Hamiltonian homeomorphism coincide with the spectral invariants of a $C^2$--small Morse function.  Hence, their count is at least the cup length of the manifold.  This is perhaps an indication that, on symplectic manifolds of dimension at least four, one cannot define the notion of  action for fixed points of an arbitrary Hamiltonian homeomorphism. See Remark 20 in \cite{BHS}.} distinct spectral invariants.

\subsection{Further consequences of continuity of $\gamma$} \label{sec:consequences}

One of the fascinating aspects of symplectic topology is the existence of an intriguing interplay between flexible (soft) and rigid (hard) sides of the subject.  This interplay permeates through $C^0$ symplectic topology as well:  Coisotropic submanifolds (and even their reductions) as well as symplectic submanifolds of co-dimension $ 2 $ are $C^0$ rigid \cite{HLS13, Opshtein, HLS14, BuOp}, but subcritical isotropic submanifolds,  symplectic submanifolds of codimension greater than two, and even  the most basic notion in symplectic geometry, that of symplectic area, are $C^0$ flexible \cite{BuOp}. 
 
 Theorem \ref{theo:gamma} and Corollary \ref{corol:barcodes}, together with
the recent $C^0$ counterexample to the Arnold conjecture \cite{BHS},  point towards a miraculous tale  of flexibility and rigidity:   In dimensions greater than two, fixed points are flexible, but the action spectrum and its barcode structure are rigid! 

Theorem \ref{theo:gamma} and Corollary \ref{corol:barcodes} yield new applications which manifest rigidity on higher dimensional symplectic manifolds, some of which are listed below. These applications were known to hold in dimension $2$.  Whether they would extend to higher dimensions was a mystery given the aforementioned flexibility results in higher dimensions. The applications can be derived using the same arguments as in dimension 2, from the $C^0$-continuity of the spectral norm and of barcodes.  Therefore, we will not provide any proofs but only refer to the relevant papers treating the analogous two-dimensional results.

\medskip
\subsubsection{The displaced disks problem.}
The displaced disks\footnote{The original question was posed in the two-dimensional setting, whence the use of the word ``disk''.} problem, posed by F. B\'eguin, S. Crovisier, and F. Le Roux, asks if a $C^0$ small Hamiltonian homeomorphism can displace a \emph{large} symplectic ball.  We will show that the answer is negative on all symplectically aspherical manifolds.   The case of closed surfaces was resolved in \cite{Sey13b}.

By a symplectic ball we mean the image of a symplectic embedding $i: (B, \omega_0) \rightarrow (M, \omega)$, where $(B, \omega_0)$ denotes a closed Euclidean ball  equipped with the standard symplectic structure.  If we know that $B$ has radius $r$, we then refer to its image as a symplectic ball of radius $r$.

\begin{theo}\label{theo:displaced_disks}
Let $(M, \omega)$ be closed, connected and symplectically aspherical.   For every $r>0$, there exists $\epsilon >0$ with the following property: if $\phi \in \overline{\Ham}(M, \omega)$ displaces a symplectically embedded ball of radius $r$, then $d_{C^0}(\id, \phi) > \epsilon$.  
\end{theo}

The above result tells us that Hamiltonian homeomorphisms which are small in the $C^0$ sense cannot displace large sets.  This may be interpreted as a $C^0$ analogue of the celebrated energy-capacity inequality \cite{hofer90, lalonde-mcduff, usher10}.

\subsubsection{Rokhlin groups and almost  conjugacy.}
We will be addressing the following question  of B\'eguin, Crovisier, and Le Roux: Does $\overline{\Ham}(M, \omega)$ possess a dense conjugacy class?  The fact that the answer to this question is negative is a consequence of Theorem \ref{theo:displaced_disks}. The case of surfaces was resolved in \cite{EPP, Sey13b}. 
The question of existence of topological groups which possess dense conjugacy classes is of interest in ergodic theory; see \cite{glasner-weiss01, glasner-weiss08}.  Glasner and Weiss refer to such groups as \emph{Rokhlin} groups.   An interesting example of  a Rokhlin group is  the identity component of 
the group of  homeomorphisms of  any even dimensional sphere equipped with the topology of uniform convergence. For further examples see \cite{glasner-weiss01, glasner-weiss08}.

Studying the above question naturally leads to the consideration of  an equivalence relation called \emph{almost conjugacy}:   This is the smallest Hausdorff equivalence relation which is larger than the conjugacy relation\footnote{The almost conjugacy relation may be characterized by the following universal property: $\varphi \sim \psi$ if and only if $f(\varphi) = f(\psi)$ for any continuous function $f: \overline{\Ham}(M, \omega) \to Y$ such that $f$ is invariant under conjugation and $Y$ is a Hausdorff topological space.};  see \cite{LSV} for further details. 
It is introduced and studied  extensively, in the context of closed surfaces, in \cite{LSV}.  An important feature of almost conjugacy is that in Rokhlin groups any two elements are almost conjugate, and hence the relation is trivial for such groups.\footnote{A notion very closely to that of almost conjugacy, called $\chi$--equivalence, arises naturally in the study of surface group actions on the circle; see \cite{MannWolff} and references therein.}

The two theorems below were first proven in the two-dimensional setting in \cite{LSV}.  Here, we extend them to higher dimensional  symplectically aspherical manifolds.

\medskip

In the theorem below, $\Fix_c(\varphi)$ denotes the set of contractible fixed points of a Hamiltonian diffeomorphism $\varphi$.  Given an isolated point $x \in \Fix_c(\varphi)$, we denote by $r(\varphi,x)$ the rank of the local Floer homology groups of $\varphi$ at the point $x$.  We remark that if $x$ is a non-degenerate fixed point of $\varphi$, then $r(\varphi,x) =1$.
 
 \begin{theo}\label{theo:almost_conjugacy1}
 Let $(M, \omega)$ be closed, connected and symplectically aspherical. 
 Let $\varphi, \psi$ be two Hamiltonian diffeomorphisms with finitely many contractible fixed points.  If $\varphi$ is almost conjugate to $\psi$ in $ \overline{\Ham}(M,\omega) $, then 
 
  $$\sum_{x\in \Fix_c(\varphi)} r(\varphi,x) = \sum_{x\in \Fix_c(\psi)}r(\psi,x) .$$ 
  
  In particular, if $\varphi,\psi$ are non-degenerate, then they have the same number of fixed points.
 \end{theo}
  Observe that as a direct consequence of the above theorem we see that the almost conjugacy relation on $\overline{\Ham}(M, \omega)$ is non-trivial and so  $\overline{\Ham}(M, \omega)$ is not a Rokhlin group. 

\medskip
 Our second result on the almost conjugacy relation tells us that barcodes are capable of detecting the wild dynamics of  homeomorphisms.  
 \begin{theo}\label{theo:almost_conjugacy2}
 Let $(M, \omega)$ be closed, connected and symplectically aspherical.  
 There exists a Hamiltonian homeomorphism $\varphi$ which is not almost conjugate to any Hamiltonian diffeomorphism.  In particular, the closure of the conjugacy class of $\varphi$ contains no Hamiltonian diffeomorphisms.
 \end{theo}
 
 It would be interesting to know whether analogues of the above result hold for other (not necessarily symplectic) transformation groups.  Of course, one would first have to know that the transformation group in question is not Rokhlin.

\subsubsection{An application to Hofer geometry}  
We will answer the following question of Le Roux \cite{LeRoux-6Questions} on certain classes of symplectic manifolds:    For any $A > 0$, let $E_A$ be the complement of the closed ball of radius $ A $ in Hofer's metric, i.e. 
$E_A:= \{ \phi \in \Ham(M, \omega):  d_H(\id, \phi) > A\}$.  Does $E_A$ have non-empty $C^0$ interior for all $A >0$? This question has been answered affirmatively in certain settings; see \cite{EPP, Sey12}.

\begin{theo}\label{theo:question_leroux}
Let $(M, \omega)$ be closed, connected and symplectically aspherical.  If the $\gamma$ norm is unbounded on $(M, \omega)$, then the set $E_A$ has non-empty $C^0$-interior.
\end{theo}

It is expected, but not proven, that $\gamma$ is unbounded on all symplectically aspherical manifolds. It is known that, $\gamma$ is unbounded  on products  of the form $(\Sigma, \omega_1) \times (N, \omega_2)$,  where $\Sigma$ is a closed surface other than the sphere. However, as pointed out in \cite{EPP}, for these manifolds one can prove the above theorem by applying the energy-capacity inequality on the universal cover.

We should point out that it is expected, and can be confirmed on a large class of symplectic manifolds, that the $C^0$ interior of a ball of finite radius, in Hofer's metric, is empty: there exist Hamiltonian diffeomorphisms which are arbitrarily $C^0$ small and Hofer large.

\subsection*{Organization of the paper}  

Sections \ref{sec:hamiltonian_homeos} and \ref{sec:prel-hamilt-floer} are devoted to preliminaries on symplectic and Hamiltonian diffeomorphisms, Hofer's distance, Floer theory and spectral invariants.
In Section \ref{sec:proofs_continuity_action}, we prove the continuity of the spectral norm and Theorem \ref{theo:gamma}. Section \ref{sec:Arnold_conj_proof} contains the proof of the generalized Arnold conjecture, Theorem \ref{theo:main}. Finally, in appendix, we prove that a $C^0$-generic Hamiltonian homeomorphism admits infinitely many fixed points.

\subsection*{Aknowledgments}
  The generalization of the Arnold conjecture presented in this paper came to life after our realization that the methods used by Wyatt Howard in \cite{Howard2012} could be adapted to non-smooth settings.  We thank him for helpful communications.

 We would like to thank Pierre-Antoine Guih\'eneuf,  Helmut Hofer,   R\'emi Leclercq, Fr\'ed\'eric Le Roux, Nicolas Vichery and Claude Viterbo for helpful conversations.  Lastly, we thank Egor Shelukhin for bringing  the following to our attention: (a variant of) Inequality \eqref{eq:kis-shel} and the fact that it implies $C^0$ continuity of barcodes; these results appear in \cite{kis-shel}.
 
SS This paper was partially written during my stay at the Institute for Advanced Study in the academic year 2016-2017.  I greatly benefited from the lively research atmosphere of the IAS and would like to thank the members of the School of Mathematics for their warm hospitality.  

VH is partially supported by the ANR project ``Microlocal'' ANR-15-CE40-0007.

LB is partially supported by ISF grants 1380/13 and 2026/17, by the ERC Starting grant 757585, and by the Alon fellowship.

\section{Preliminaries from symplectic geometry}\label{sec:hamiltonian_homeos}
  For the remainder of this section, $(M, \omega)$ will denote a closed and connected symplectic manifold. Recall that a symplectic diffeomorphism is a diffeomorphism $\theta: M \to M$ such that $\theta^* \omega = \omega$.   The set of all symplectic diffeomorphisms of $M$ is denoted by $\Symp(M, \omega)$.  
  Hamiltonian diffeomorphisms constitute an important class of examples of symplectic diffeomorphisms.  These are defined as follows: A smooth Hamiltonian $H \in C^{\infty} ([0,1] \times M)$  gives rise to a time-dependent vector field $X_H$ which is defined via the equation: $\omega(X_H(t), \cdot) = -dH_t$.  The Hamiltonian flow of $H$, denoted by  $\phi^t_H$, is by definition the flow of $X_H$.  A Hamiltonian diffeomorphism is a diffeomorphism which arises as the time-one map of a Hamiltonian flow.  The set of all Hamiltonian diffeomorphisms is denoted by $\Ham(M, \omega)$; this forms a normal subgroup of $\Symp(M, \omega)$.
\subsection{Symplectic \& Hamiltonian homeomorphisms}
 We equip $M$ with a Riemannian distance $d$. Given two maps $\phi, \psi :M \to M,$ we denote
$$d_{C^0}(\phi,\psi)= \max_{x\in M}d(\phi(x),\psi(x)).$$
We will say that a sequence of maps $\phi_i : M \rightarrow M$, converges uniformly, or $C^0$--converges, to $\phi$, if $d_{C^0}(\phi_i, \phi) \to 0$ as $ i \to \infty$. Of course, the notion of $C^0$--convergence does not depend on the choice of the Riemannian metric.

\begin{definition} \label{def:sympeo}
	 A homeomorphism  $\theta : M \to M$  is said to be symplectic if it is the $C^0$--limit of a sequence of symplectic diffeomorphisms.  We will denote the set of all symplectic homeomorphisms by $\Sympeo(M, \omega)$.  
\end{definition}

The Eliashberg--Gromov theorem states that a symplectic homeomorphism which is smooth is itself a symplectic diffeomorphism. We remark that if $\theta$ is a symplectic homeomorphism, then so is $\theta^{-1}$.  In fact, it is easy to see that $\Sympeo(M, \omega)$ forms a group. 

\begin{definition}  \label{def:hameo} 
A symplectic homeomorphism $\phi $ is said to be a Hamiltonian homeomorphism if it is the $C^0$--limit of a sequence of Hamiltonian diffeomorphisms.  We will denote the set of all Hamiltonian homeomorphisms by $\overline{\Ham}(M, \omega)$. 
\end{definition}

  It is not difficult to see that  $\overline{\Ham}(M, \omega)$ forms a normal subgroup of  $\Sympeo(M, \omega)$.   It is a long standing open question whether a smooth Hamiltonian homeomorphism, which is isotopic to identity in $\Symp(M,\omega) $, is a Hamiltonian diffeomorphism or not;  this is often referred to as the $C^0$ Flux conjecture; see \cite{LMP, Sey13c, buhovsky14}.  
  
  We should add that alternative definitions for Hamiltonian homeomorphisms do exist within the literature of $C^0$ symplectic topology.  Most notable of these is a definition given by M\"uller and Oh in \cite{muller-oh} which has received much attention. A homeomorphism which is Hamiltonian in the sense of \cite{muller-oh} is necessarily Hamiltonian in the sense of Definition \ref{def:hameo} and thus, the results of this article apply to the homeomorphisms of \cite{muller-oh} as well. 
  
  \subsection{Hofer's distance}\label{sec:hofer_distance}  
  We will denote the Hofer norm on $C^{\infty}([0,1] \times M)$ by  \[ \| H \| = \int_0^1 \left( \max_{x \in M} H(t,\cdot) - \min_{x \in M} H(t, \cdot)\right) dt.\]   The Hofer distance on $\Ham(M, \omega)$ is defined via $$d_{\mathrm{Hofer}}(\phi, \psi)= \inf \Vert H-G\Vert,$$ where the infimum is taken over all $H, G$ such that $\phi^1_H = \phi$ and $\phi^1_G = \psi$.   This defines a bi-invariant distance on $\Ham(M, \omega)$.  
  
  Given $B\subset M$, we define its \emph{displacement energy} to be $$e(B):= \inf \{ d_{\mathrm{Hofer}}(\phi, \id): \phi(B) \cap B = \emptyset\}.$$
Non-degeneracy of the Hofer distance is a consequence of the fact that $e(B) >0$ when $B$ is an open set.  This was proven in \cite{hofer90, polterovich93, lalonde-mcduff}.

\section{Preliminaries on Hamiltonian Floer theory, spectral invariants}\label{sec:prel-hamilt-floer}
 Throughout this section, $(M, \omega)$ will denote a closed, connected and symplectically  aspherical manifold of dimension $2n$.  We fix a ground field $\F$, e.g.  $\Z_2, \mathbb{Q}$, or $\mathbb{C}$. Singular homology, Floer homology and all notions relying on these theories depend on the field $\F$. 

\medskip

\noindent \textbf{The action functional and its spectrum.}
 Let $\Omega(M)$ denote the space of smooth contractible loops in $ M $, viewed as maps $\R / \Z \rightarrow M$. Let $H: [0,1] \times M$ denote a smooth Hamiltonian.  The associated  action functional $\mathcal{A}_H: \Omega(M) \rightarrow \mathbb{R}$ is defined by
   $$\mathcal{A}_H(z) :=  \int_{0}^{1} H(t,z(t))dt \text{ }- \int_{D^2} u^*\omega,$$
where $u: D^2 \rightarrow M$ is a capping disk for $z$.  Note that because $\omega|_{\pi_2(M)} =0$, the value of  $\mathcal{A}_H(z)$ does not depend on the choice of $u$.

It is a well-known fact that the set of critical points of $\mathcal{A}_H$, denoted by $\Crit(\mathcal{A}_H)$, consists of  $1$--periodic orbits of the Hamiltonian flow $\phi^t_H$.   The action spectrum of $H$, denoted by $\Spec(H)$, is the set of critical values of $\mathcal{A}_H$.  The set $\Spec(H)$ has Lebesgue measure zero.  

Suppose that $H$ and $G$ are two Hamiltonians such that $\phi^1_H = \phi^1_G$.  Then, there exists a constant $C \in \R$ such that 
\begin{equation}\label{eq:invariance_spectrum}
\Spec(H) =  \Spec(G) + C,
\end{equation}
where $\Spec(G) + C$ is the set obtained from $\Spec(G)$ by adding $C$ to each of its elements.  It follows that, given a Hamiltonian diffeomorphism $\phi$, its spectrum $\Spec(\phi)$ is a subset of $\R$ which is well-defined upto a shift.

\medskip

\noindent \textbf{Hamiltonian Floer theory.}
We say that a Hamiltonian $H$ is non-degenera\-te if the graph of $\phi^1_H$ intersects the diagonal in $M \times M$ transversally.  The Floer chain complex of (non-degenerate) $H$, $CF_*(H)$, is the vector space spanned by $\Crit(\mathcal{A}_H)$ over the ground field $\F$.   The boundary map of $CF_*(H)$ counts certain solutions of a perturbed Cauchy-Riemann equation  for a chosen $ \omega $-compatible almost complex structure $ J $ on $ TM $, which can be viewed as isolated negative gradient flow lines of $\mathcal{A}_H$. There exists a canonical isomorphism, $\Phi: H_*(M) \rightarrow HF_*(H)$, between the homology of Floer's chain complex and the singular homology of $M$;  \cite{floer89,PSS}. We will denote this isomorphism by $\Phi: H_*(M) \rightarrow HF_*(H)$.

For any $a \in \R$, we will define $CF_*^a(H):= \{\sum a_z z \in CF_*(H): \mathcal{A}_H(z) < a \}.$
It turns out that the Floer boundary map preserves $CF_*^a(H)$ and hence one can define its homology $HF_*^a(H)$.  The homology groups     $HF_*^a(H)$ are referred to as the filtered Floer homology groups of $H$.

More generally, filtered Floer homology groups may be defined for any  interval of the form $(a,b) \subset \R$:  $HF_*^{(a,b)}(H)$ is defined to be the homology of the quotient complex $CF_*^{(a,b)}(H) = CF_*^b(H)/CF_*^a(H)$.  Let us remark that the filtered Floer homology groups do not depend on the choice of the almost complex structure $J$.  

We should also add that one can define filtered Floer homology even when $H$ is degenerate.  Consider $(a,b) \subset \R$ such that $-\infty \leq a,b \leq \ \infty$ are not in  $\Spec(H)$ and define $HF_*^{(a,b)}(H)$  to be $HF_*^{(a,b)}(\tilde H)$, where $\tilde H$ is non-degenerate and sufficiently $C^2$--close to $H$.  It can be shown that $HF_*^{(a,b)}(H)$ does not depend on the choice of $\tilde H$.

It turns out that filtered Floer homology groups are  in fact  invariants of the time-1 map $\phi^1_H$ in the following sense:  Suppose that  $H$ and $G$ are two Hamiltonians such that $\phi^1_H = \phi^1_G$.  Then, there exists a constant $C \in \R$ such that we have a canonical isomorphism 
\begin{equation}\label{eq:invariance_floer}
HF_*^a(H) \cong HF_*^{a+C}(G) ,  \;\;\; \forall a \in \R.
\end{equation}
As explained in Remark 2.10 of \cite{PS14}, the above is a consequence of results from \cite{Seidel, schwarz}.

\medskip
\noindent\textbf{Spectral invariants}\label{sec:spec_inv}
Spectral invariants were first introduced by Viterbo in \cite{viterbo} in the case of $\R^{2n}$.  Here, we will be closely following Schwarz \cite{schwarz} which treats the case of closed and symplectically aspherical manifolds.\footnote{See \cite{Oh05} for the construction of these invariants on general symplectic manifolds.} 

Denote by $i_a^* : HF_*^a(H) \rightarrow HF_*(H)$  the map induced by the inclusion $i_a : CF_*^a(H) \rightarrow CF_*(H)$
and let $\alpha$ be  a non-zero  homology class.  The spectral invariant $c(\alpha, H)$ is defined by
 $$c(\alpha, H) := \inf \{a \in \R: \Phi(\alpha) \in \mathrm{Im} (i_a^*) \},  $$
where  $\mathrm{Im} (i_a^*)$ denotes the image of $i_a^* : HF_*^a(H) \rightarrow HF_*(H)$. (Recall that $ \Phi $ is the canonical isomorphism between $ H_*(M) $ and $ HF_*(H) $).

It is well-known that (see \cite{schwarz}) that $|c(\alpha,H) - c(\alpha, G)| \leq \| H -G \|$, where $ \| H \| = \int_0^1 \left( \max_{x \in M} H(t,\cdot) - \min_{x \in M} H(t, \cdot)\right) dt$ denotes the Hofer norm of $H$. This allows us to define $c(\alpha,H)$ for any smooth (or even continuous) Hamiltonian: we set $c(\alpha, H):= \lim c(\alpha, H_i)$, where $H_i$ is a sequence of smooth, non-degenerate Hamiltonians such that  $ \| H -H_i \| \to 0$.

 Given two Hamiltonians $H, G$, we will denote $\bar{H}(t,x) = -H(t, \phi^t_H(x))$ and  $ H \# G (t,x) = H(t,x) + G(t, (\phi^t_H)^{-1}(x))$.  The flows of  these Hamiltonians are $(\phi^t_H)^{-1}$ and  $\phi^t_H\circ\phi^t_G$, respectively.
Spectral invariants satisfy the following properties whose proofs can be found in \cite{schwarz} as well as \cite{Oh05, Oh06, usher10}.
 \begin{prop} \label{prop:sepc_inv}
  The function $c: (H_*(M) \setminus \{0\}) \times C^{\infty}([0,1] \times M)  \rightarrow \mathbb{R}$ has the following properties:
    \begin{enumerate}
    \item  $c(\alpha,H) \in \Spec(H) $,
    \item  $c(\alpha \cap \beta ,H\#G) \leq c(\alpha,H) + c(\beta,G)$, 
    \item  $|c(\alpha,H) - c(\alpha,G)| \leq \Vert H - G \Vert$, 
    \item $c([M], H) = - c([pt], \bar{H})$, 
    \item Let $f \in C^{\infty}(M)$ denote an autonomous Hamiltonian and suppose that $\alpha \in H_*(M)$ is a non-zero homology class.  Then, for $\eps>0$ sufficiently small, 
      \[c(\alpha, \eps f) = c_{LS}(\alpha,\eps f) =  \eps \, c_{LS}(\alpha,f),\]
    where $ c_{LS}(\alpha,f)$ is the topological quantity\footnote{Here, the subscript ``LS'' refers to ``Lusternik-Schnirelman'' since this quantity is related to the so-called Lusternick-Schnirelman theory; this is discussed in further details in Section \ref{sec:LS_theory}.} defined by 
      \[ c_{LS}(\alpha,f)=\, \inf \left\{a \in \R: \alpha \in \mathrm{Im}\,(\, H_*( \{f\! <\! a \}) \rightarrow H_*(M) \right)\,\}.\]
    \end{enumerate}   
    \end {prop}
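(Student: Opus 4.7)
The plan is to verify the five properties directly from the definition $c(\alpha,H) = \inf\{a \in \R : \Phi(\alpha) \in \mathrm{Im}(i_a^*)\}$, working first with non-degenerate $H$ and passing to arbitrary smooth (or continuous) Hamiltonians via property (3) together with the $C^2$-density of non-degenerate Hamiltonians.

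For the spectrality property (1), the idea is that $a \mapsto HF_*^a(H)$ is locally constant on $\R \setminus \Spec(H)$, since $CF_*^a(H)$ only changes when a critical value is crossed. Hence the set $\{a : \Phi(\alpha) \in \mathrm{Im}(i_a^*)\}$ is an upper ray whose boundary, being the point where the image changes, must lie in $\Spec(H)$; finiteness of $\Crit(\mathcal{A}_H)$ ensures the infimum is not $-\infty$. For the triangle-type inequality (2), the key input is the pair-of-pants product $HF_*^a(H) \otimes HF_*^b(G) \to HF_{*+*-2n}^{a+b}(H\# G)$ on filtered Floer homology, which respects the action filtration by an energy estimate on the pair-of-pants moduli spaces and is intertwined by the PSS isomorphism with the intersection product on $H_*(M)$. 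Picking representatives of $\Phi(\alpha)$ and $\Phi(\beta)$ at filtration levels just above $c(\alpha,H)$ and $c(\beta,G)$ and multiplying them produces a representative of $\Phi(\alpha\cap\beta)$ at level $c(\alpha,H)+c(\beta,G)+\varepsilon$, and letting $\varepsilon \to 0$ yields (2). For the Hofer Lipschitz bound (3) one invokes the linear-homotopy continuation map $HF_*^a(H) \to HF_*^{a+\|H-G\|}(G)$, which commutes with PSS and shifts filtration by at most $\|H-G\|$; the inequality follows by symmetry.

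For the duality property (4), one uses the natural non-degenerate pairing between the Floer complexes of $H$ and $\bar{H}$ induced at the chain level by the bijection between $1$-periodic orbits of $\phi^t_H$ and $\phi^t_{\bar H}$, under which the actions are opposite. On homology this descends to a pairing that, under PSS, becomes the Poincar\'e duality pairing sending $([M],[pt])$ to a nonzero number; a short chase of the infimum through this pairing delivers $c([M],H)=-c([pt],\bar{H})$. Finally for (5), when $\varepsilon$ is small enough that $\varepsilon f$ is $C^2$-small, the $1$-periodic orbits of $\phi^t_{\varepsilon f}$ are precisely the critical points of $f$ (all constant), with action $\varepsilon f(p)$. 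For a suitable choice of almost complex structure the Floer differential reduces to the Morse differential of $\varepsilon f$, and PSS becomes the canonical Morse-to-singular isomorphism. The spectral invariant therefore identifies with the Lusternik--Schnirelmann min-max value $c_{LS}(\alpha,\varepsilon f)$, and the homogeneity $c_{LS}(\alpha,\varepsilon f)=\varepsilon\, c_{LS}(\alpha,f)$ is immediate from the definition.

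The main technical obstacle is compatibility of the PSS isomorphism with the extra structures: one must check that PSS intertwines the pair-of-pants product with the intersection product (essential for (2) and (4)) and, in (5), that the Morse-to-Floer identification of complexes for $C^2$-small autonomous Hamiltonians is PSS-compatible. Both of these are classical but rest on careful comparisons of the relevant moduli spaces; having established them, all five properties follow by the routine manipulations sketched above.
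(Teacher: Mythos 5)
The paper supplies no proof of this proposition; it delegates entirely to the standard references \cite{schwarz, Oh05, Oh06, usher10}. Your sketch is a correct outline of exactly the arguments given there: local constancy of filtered Floer homology off the spectrum for (1), the filtered pair-of-pants product intertwined with the intersection product via PSS for (2), continuation maps for (3), the chain-level Poincar\'e duality pairing for (4), and the Floer-equals-Morse regime for $C^2$-small autonomous Hamiltonians for (5); the ``main technical obstacle'' you flag (PSS compatibility with products) is indeed the non-trivial content of those references.

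One caveat on (3) is worth naming. The continuation-map argument gives the \emph{one-sided} estimate $c(\alpha,G)-c(\alpha,H)\le\int_0^1\max_x\bigl(G_t-H_t\bigr)\,dt$ and its reverse, and these two quantities sum to $\|H-G\|$ but are not individually bounded by $\|H-G\|$ for arbitrary $H,G$: taking $G=H+\mathrm{const}$ makes $\|H-G\|=0$ while $c(\alpha,G)-c(\alpha,H)$ is that constant. Your phrase ``the inequality follows by symmetry'' therefore needs the additional observation that both one-sided quantities are non-negative, which holds after imposing the mean-normalization $\int_M H_t\,\omega^n=0$ — a convention Schwarz builds in and the paper leaves implicit. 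With that normalization in place, your outline coincides with the standard proofs.
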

   
   \medskip 
    As a consequence of Equation \eqref{eq:invariance_floer}, spectral invariants are invariants of the time-1 map $\phi^1_H$ in the following sense:  If  $H$ and $G$ are two Hamiltonians such that $\phi^1_H = \phi^1_G$, then there exists a constant $C \in \R$ such that
\begin{equation}\label{eq:invariance_spec}
c(\alpha, H) - c(\alpha, G) = C,  \;\;\; \forall \alpha \in H_*(M) \setminus \{0\}.
\end{equation}
Hence, we see that the difference of two spectral invariants defined via  
$$\gamma(\alpha, \beta; \phi^1_H):= c(\alpha, H) - c(\beta,H)$$ depends only on $\phi^1_H$.  As mentioned in the introduction, the so-called spectral norm $\gamma : \Ham(M, \omega) \rightarrow \R$ is defined via $\gamma(\cdot) := \gamma([M], [pt]; \cdot)$.  The $\gamma$ norm satisfies the following list of properties:

\begin{enumerate}
\item Non-degeneracy: $\gamma(\phi) \geq 0$ with equality if and only $\phi =\id$,
\item Hofer boundedness: $\gamma(\phi) \leq d_{\mathrm{Hofer}}(\phi, \id)$,
\item Conjugacy invariance: $\gamma(\psi \phi \psi^{-1}) = \gamma(\phi)$ for any $\phi \in \Ham(M,\omega)$ and any $\psi \in \Symp(M, \omega)$,
\item Triangle inequality: $\gamma(\phi \psi) \leq \gamma(\phi) + \gamma(\psi)$,
\item Duality: $\gamma(\phi^{-1})=\gamma(\phi)$,
\item Energy-Capacity inequality: $\gamma(\phi) \leq 2 e(\rm{Supp}(\phi))$, where $e(\rm{Supp}(\phi))$ denotes the displacement energy of the support of $\phi$.
\end{enumerate}

Let us point out that the non-degeneracy property is an immediate consequence of the following: Let $B \subset M$ denote a symplectically embedded ball of radius $r$. If $\phi(B) \cap B = \emptyset$, then,
\begin{equation}\label{eq:energy-capacity}
\pi r^2 \leq \gamma(\phi).
\end{equation} 
The above inequality, which is also referred to as the energy-capacity inequality, follows from the results in \cite{usher10}.

\section{Proof of Theorem \ref{theo:gamma}: $C^0$-continuity of $\gamma$}
\label{sec:proofs_continuity_action}
We will start our proof by establishing the $C^0$-continuity of $\gamma$.

It was proved in \cite{Sey12} that the spectral norm $\gamma$ is $C^0$-continuous on the subset of diffeomorphisms of $\Ham(M,\omega)$ generated by Hamiltonians supported in the complement of a given open subset. Our proof of the $C^0$-continuity of $\gamma$ will consist in reducing to this case. More precisely, we will need the following slight variant of (the symplectically aspherical case of) Theorem 1 in \cite{Sey12}.

\begin{lemma}\label{lemma:variant-sword} Let $(M,\omega)$ be a closed symplectically aspherical  manifold, and let $U$ be a connected  open subset in $M$. Then, for every $\eps>0$, there exists $\Delta>0$ such that for any $\phi\in\Ham(M,\omega)$ satisfying $\phi(x)=x$ for all $x\in U$, and $d_{C^0}(\phi,\id)< \Delta$, we have $\gamma(\phi)< \eps$.
\end{lemma}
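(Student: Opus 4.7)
The approach is to reduce to the main theorem of \cite{Sey12} (in its aspherical case), which gives the same conclusion under the stronger hypothesis that the Hamiltonian generating $\phi$ is itself supported in the complement of a given open set, rather than merely that its time-one map fixes such a set pointwise. First I would pick nested open balls $\overline{V'}\subset V\subset\overline V\subset U$ inside the connected open set $U$, and apply Theorem~1 of \cite{Sey12} to the ball $V'$ to obtain a threshold $\Delta_0>0$ such that every $\psi\in\Ham(M,\omega)$ generated by a Hamiltonian vanishing on $V'$ with $d_{C^0}(\psi,\id)<\Delta_0$ satisfies $\gamma(\psi)<\eps$. The lemma then reduces to showing that, for $\Delta>0$ chosen sufficiently small, every $\phi$ satisfying the hypotheses admits a generating Hamiltonian $K$ which vanishes on $V'$ and whose entire flow stays within $\Delta_0$ of the identity in $C^0$.

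To produce such a $K$ from an arbitrary Hamiltonian $H$ generating $\phi$, the plan is to compose the isotopy $\phi^t_H$ on the right by a Hamiltonian loop $G$ of $M$ (a Hamiltonian isotopy ending at the identity) whose flow satisfies $\phi^t_G|_{V'}=(\phi^t_H)^{-1}|_{V'}$ for every $t\in[0,1]$. Such a loop is made possible by the fact that, since $\phi|_U=\id$, the maps $t\mapsto\phi^t_H(x)$ for $x\in V'$ are genuine loops based at $x$. The composed flow $\phi^t_K:=\phi^t_H\circ\phi^t_G$, generated by $K:=H\#G$, then fixes $V'$ pointwise at every time $t$. Since $V'$ is connected, the identity $dK_t|_{V'}\equiv 0$ forces $K_t|_{V'}$ to be a function of $t$ alone; subtracting this function (which does not alter the generated flow) gives a Hamiltonian vanishing on $V'$, as required.

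The main obstacle is maintaining $C^0$-control throughout the construction. The original representative $H$ of $\phi$ has $d_{C^0}(\phi^t_H,\id)$ a priori uncontrolled for intermediate $t$, so the cancellation loop $G$ might itself fail to be $C^0$-small and the composed flow $\phi^t_K$ could move far from the identity. A preliminary step is therefore to replace $H$ by a representative $\tilde H$ of $\phi$ whose entire flow is $C^0$-close to the identity---this is where both the $C^0$-smallness of $\phi$ and the asphericity of $(M,\omega)$ are used essentially, the latter being needed to guarantee that the modification preserves the Floer-theoretic invariants on which the bound in \cite{Sey12} rests. Once such a $\tilde H$ is fixed, $G$ can be built explicitly inside a Darboux chart covering $V$ by interpolating $(\phi^t_{\tilde H})^{-1}|_{V'}$ to the identity across the annulus $V\setminus\overline{V'}$ by means of a suitable cutoff, while keeping its $C^0$-size of the same order as $\Delta$. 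Executing this two-step procedure---first flattening the isotopy $\phi^t_H$ in the $C^0$-sense, then inserting the cancellation loop---is the technical core of the argument, since it requires modifying the intermediate diffeomorphisms of a Hamiltonian isotopy while retaining its time-one map together with all relevant symplectic-topological data.
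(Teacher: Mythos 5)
Your plan has a genuine gap at the step you yourself flag as the ``technical core.'' You propose to replace an arbitrary generating Hamiltonian $H$ of $\phi$ by one, $\tilde H$, whose \emph{entire} isotopy $\phi^t_{\tilde H}$ stays $C^0$-close to the identity, so that the cancellation loop $G$ can be built inside a Darboux chart by a cutoff. But whether a $C^0$-small Hamiltonian diffeomorphism admits a uniformly $C^0$-small generating isotopy is a delicate question about which nothing seems to be known in dimensions above two; it is not supplied by asphericity, and it is not something the paper ever needs or proves. Without it, a loop $G$ with $\phi^t_G|_{V'}=(\phi^t_H)^{-1}|_{V'}$ and $\phi^1_G=\id$ cannot be obtained by the cutoff you describe, because for intermediate $t$ the sets $(\phi^t_H)^{-1}(V')$ need not stay inside $V$, or indeed inside any fixed chart. (Note also that you do not actually need the flow of $K$ to be small to apply Theorem~1 of \cite{Sey12}---only its time-1 map $\phi_K^1=\phi$---so the purported need for the preliminary step comes entirely from the loop construction; this only underlines that it is the loop construction, not the reduction itself, that is blocked.)

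The paper avoids the issue altogether by \emph{rerunning} the argument of \cite{Sey12} rather than invoking its statement, and it needs no control whatsoever over intermediate times. Since $U$ is connected and consists of fixed points of $\phi$, those fixed points all carry one common action value $A$. Pick a $C^2$-small Morse function $F$ with all critical points inside $U$ and $\max F-\min F$ small. Then $\phi_F^1$ has no fixed point in $M\setminus U$, hence displaces every point of $M\setminus U$ by at least some $\Delta>0$; once $d_{C^0}(\phi,\id)<\Delta$, the composition $\phi_F^1\circ\phi$ has precisely the critical points of $F$ as fixed points, with actions $A+F(x)$. Its entire spectrum therefore lies in an interval of length $\max F-\min F$, so $\gamma(\phi_F^1\circ\phi)$ is small, and the triangle inequality $\gamma(\phi)\leq\gamma(\phi_F^{-1})+\gamma(\phi_F^1\circ\phi)$ finishes the proof. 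Only time-1 maps ever appear, and the hard question about generating isotopies that your reduction requires never arises.
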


The proof is very similar to the one provided in \cite{Sey12}, with a small modification due to the fact that our map $\phi$ is not supposed to be generated by a Hamiltonian supported in $M\setminus U$.

\begin{proof} By assumption, the points of $U$ are all fixed points of $\phi$. The value of their action depends on the choice of the Hamiltonian which generates $\phi$. However, since $U$ is assumed to be connected, this value is constant on $U$, and we will denote it by $A$.

  Let $F$ be a Morse function on $M$ all of whose critical points are located in $U$. We assume that $F$ is so small that its Hamiltonian flow does not admit any other periodic orbits of length $\leq 1$ than its critical points, and that $\max F-\min F<\eps$. Thus, the spectrum of $F$ is the set of critical values of $F$.  This also implies that $\phi_F^1$ has no fixed points in $M\setminus U$. Thus, there exists $\Delta>0$ such that for all $x\in M\setminus U$, we have $d(\phi_F^1(x),x)>\Delta$ (in the terminology of \cite{Sey13, Sey12}, the map $\phi_F^1$ ``$\Delta$-shifts'' $M\setminus U$).

  As a consequence, if $d_{C^0}(\phi,\id)<\Delta$, then $\phi_F^1\circ\phi$ does not have any fixed point in $M\setminus U$. Since $\phi$ acts as the identity on $U$, we get that $\phi_F^1\circ\phi$ has the same set of fixed points as $\phi_F^1$, which is in turn the set of critical points of $F$. Moreover, the action of point $x$ is $F(x)$ if we think of $x$ as a fixed point of $\phi_F^1$, and is $A+F(x)$ if we see it as fixed point of $\phi_F^1\circ\phi$.

  Therefore, each spectral invariant $c(\alpha,\phi)$ of $\phi$ takes the form $A+F(x)$ for some critical point $x$ of $F$. In particular,
  $$\gamma(\phi_F^1\circ\phi)\leq A+\max F-A-\min F<\tfrac\eps 2.$$
  Using the triangle inequality, we deduce that under the condition $d_{C^0}(\phi,\id)<\Delta$, we have: $$\gamma(\phi)\leq\gamma(\phi_F^{-1}) + \gamma(\phi_F^1\circ\phi) < \tfrac \eps 2 + \tfrac \eps 2=\eps.$$
\end{proof}

In order to reduce to Lemma \ref{lemma:variant-sword}, we will use a trick which consists in doubling coordinates by introducing the auxiliary map:
\begin{align*}\Phi=\phi\times\phi^{-1}:\ M\times M &\to M\times M,\\
   (x,y)&\mapsto (\phi(x),\phi^{-1}(y)),
\end{align*}
where we endow $M\times M$ with the symplectic form $\omega\oplus\omega$.
The map $\Phi$ is a Hamiltonian diffeomorphism. More precisely, if $\phi$ is the time-1 map of a Hamiltonian $H$, then $\Phi$ is the time-1 map of the Hamiltonian $K: [0,1]\times M\times M\to \R$, $(t,x,y)\mapsto H(t,x)-H(t,\phi_H^t(y))$. Moreover, if $\phi$ is $C^0$ close to the identity, so is $\Phi$. According to the product formula for spectral invariants (Theorem 5.1 in \cite{Entov2009}), we have $c(K)=c(H)+c(\bar H)=\gamma(\phi)$ and similarly,  $c(\bar{K})=c(\bar{H})+c(H)=\gamma(\phi)$. Thus,
\begin{equation}\label{eq:Phi-phi}\gamma(\Phi)=2\gamma(\phi).
\end{equation}
We will prove the following Lemma.

\begin{lemma}\label{lemma:trick} For any ball $B$ in $ M $ there exists a smaller ball $ B' \subset B $ with the following property. For any $ \Delta > 0 $ there exists $ \delta > 0 $ such that if $\phi\in \Ham(M,\omega)$ satisfies $d_{C^0}(\phi,\id_M)<\delta$, then one can find a Hamiltonian diffeomorphism $\Psi\in \Ham(M\times M,\omega\oplus\omega)$ satisfying the following properties:
  \begin{enumerate}[(i)]
  \item $ \supp(\Psi) \subset B \times B$ and $\supp(\Phi \circ \Psi) \subset M \times M \setminus B' \times B' $,
  \item $d_{C^0}(\Psi,\id_{M\times M})<\Delta$ and  $d_{C^0}(\Phi \circ \Psi,\id_{M\times M})<\Delta$.
  \end{enumerate}
\end{lemma}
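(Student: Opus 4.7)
The plan is to arrange that $\Psi$ agrees with $\Phi^{-1} = \phi^{-1} \times \phi$ on an open neighborhood of $\overline{B' \times B'}$ and with the identity outside $B \times B$. The first property forces $\Phi \circ \Psi = \id$ near $\overline{B' \times B'}$, yielding the support condition on $\Phi \circ \Psi$; the second is the support condition on $\Psi$. Since for a Riemannian distance $d_{C^0}(\phi^{-1}, \id) = d_{C^0}(\phi, \id) < \delta$, we have $d_{C^0}(\Phi, \id_{M \times M}) < \delta$. Once $\Psi$ is made $C^0$-close to the identity (by taking $\delta$ small), the triangle inequality gives $d_{C^0}(\Phi \Psi, \id) \leq d_{C^0}(\Phi, \id) + d_{C^0}(\Psi, \id) < \Delta$.

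I would reduce the construction to $M$: it suffices to produce $\Psi_1, \Psi_2 \in \Ham(M, \omega)$, each supported in $B$ and $C^0$-close to the identity, with $\Psi_1 = \phi^{-1}$ and $\Psi_2 = \phi$ on a common open neighborhood $V$ of $\overline{B'}$. Then $\Psi := \Psi_1 \times \Psi_2$ is Hamiltonian on $(M \times M, \omega \oplus \omega)$ and satisfies every required property. The two constructions are identical, so I describe only $\Psi_1$.

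Pick a Darboux chart identifying a neighborhood of $B$ with an open subset $U_0 \subset (\R^{2n}, \omega_0)$. Inside $U_0 \times U_0$ equipped with $\omega_0 \oplus (-\omega_0)$, the graph of $\phi^{-1}$ (restricted to where it lands in $U_0$) is Lagrangian and $C^0$-close to the diagonal $\Delta$; for $\delta$ sufficiently small it lies inside a fixed Weinstein tubular neighborhood of $\Delta$, and hence corresponds to a closed 1-form on $U_0$. Since $U_0$ is contractible the form is exact, of the form $dS$ for some $S \in C^\infty(U_0)$, which, after normalizing to vanish at a chosen point, satisfies a bound $\|S\|_{C^1} \leq c\, d_{C^0}(\phi, \id)$ for a constant $c$ depending only on $U_0$. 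Now fix $B'$ strictly smaller than $B$ and pick a bump function $\beta$ supported in $U_0$ with $\beta \equiv 1$ on some open neighborhood $V$ of $\overline{B'}$ and $\beta \equiv 0$ outside $B$; set $\tilde S := \beta S$, so that $\|\tilde S\|_{C^1} \leq C \|S\|_{C^1}$ with $C$ depending only on $\beta$. For $\delta$ small enough, the graph of $d\tilde S$ stays inside the Weinstein neighborhood, hence corresponds via the Weinstein identification to a diffeomorphism $\Psi_1$ of $U_0$ which extends by the identity to $M$. Exactness of $d\tilde S$ makes $\Psi_1$ Hamiltonian (via the linear isotopy $s \mapsto s \tilde S$). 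By construction, $\Psi_1 = \phi^{-1}$ on $V$, $\Psi_1 = \id$ outside $B$, and $d_{C^0}(\Psi_1, \id)$ can be made arbitrarily small uniformly in $\phi$. The same recipe applied to $\phi$ yields $\Psi_2$, completing the construction.

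The main technical point will be the Weinstein-to-generating-function dictionary with quantitative $C^0$ control (so that $\|d\tilde S\|_{C^0}$ small actually translates into $d_{C^0}(\Psi_1, \id)$ small), together with verifying that the cut-off section $d\tilde S$ indeed remains inside the Weinstein neighborhood; both are governed by the single small parameter $\delta$ once $B'$ and $\beta$ have been fixed.
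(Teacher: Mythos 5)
Your strategy is genuinely different from the paper's, and it contains a gap at its central step. You propose to build $\Psi_1 \in \Ham(M,\omega)$, supported in $B$ and equal to $\phi^{-1}$ near $\overline{B'}$, by viewing the graph of $\phi^{-1}$ as a Lagrangian inside a Weinstein tubular neighbourhood of the diagonal, reading off a generating function $S$, and multiplying by a bump function. The problem is the sentence asserting that the graph of $\phi^{-1}$, being $C^0$-close to the diagonal, ``hence corresponds to a closed 1-form on $U_0$.'' Lying inside the Weinstein tube is a $C^0$ statement, but being a \emph{section} of the cotangent bundle over the diagonal is a $C^1$-closeness condition, and the two are not equivalent. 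A $C^0$-small Hamiltonian diffeomorphism can easily fail sectionality: if $\phi$ rotates a tiny ball about a fixed point by angle $\pi$, then near that point the graph of $\phi$ is, in the standard local model, the anti-diagonal --- i.e.\ a whole cotangent fibre --- and does not project injectively to the base. In that case no generating function $S$ exists, and the quantitative bound $\|S\|_{C^1} \leq c\, d_{C^0}(\phi,\id)$ that your cut-off argument hinges on is unavailable. Since Lemma~\ref{lemma:trick} is invoked precisely when $\phi$ is controlled only in $C^0$ and may be arbitrarily wild in $C^1$, this is not a technical nuisance: the generating-function approach collapses in exactly the regime that makes the lemma nontrivial.

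The paper sidesteps this entirely. Instead of truncating $\phi^{-1}$, it fixes once and for all a $\phi$-independent Hamiltonian diffeomorphism $f$ of $M\times M$, supported in $B\times B$, which swaps the two factors on a smaller $B''\times B''$ (Claim~\ref{claim:switch-coord}), and sets $\Psi := \Upsilon^{-1}\circ f^{-1}\circ\Upsilon\circ f$ with $\Upsilon=\phi\times\id_M$. Being a commutator of a Hamiltonian with a symplectomorphism, $\Psi$ is automatically Hamiltonian; it is $C^0$-small whenever $\phi$ is, because $f$ is fixed and hence Lipschitz; the support condition follows since $f$ is compactly supported and $\Upsilon$ is $C^0$-close to the identity; and a short computation using the switching property of $f$ forces $\Phi\circ\Psi=\id$ near $B'\times B'$. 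No Weinstein section, no generating function, and no $C^1$ control on $\phi$ are needed anywhere in this construction.
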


We now explain why this Lemma implies the $C^0$ continuity of $\gamma$ at the identity. Pick any ball $ B \subset M $ such that $ M \setminus B $ has a non-empty interior, and let $ B' \subset B $ be a ball as provided by Lemma \ref{lemma:trick}. Let $ \eps > 0 $, and pick $\Delta$ as provided by Lemma \ref{lemma:variant-sword} for the cases of $ (M \times M, \omega \oplus \omega) $, $ U = B' \times B' $ and $ U = \mathrm{int} (M \times M \setminus B \times B) $.  Moreover, let $\delta$ as also provided by Lemma~\ref{lemma:trick}. Finally let $\phi$ be such that $d_{C^0}(\phi,\id_M)<\delta$ and pick $\Psi$ as given by Lemma \ref{lemma:trick}. By the conclusion (ii) of Lemma \ref{lemma:trick} and by Lemma \ref{lemma:variant-sword} we have $ \gamma(\Psi), \gamma(\Phi \circ \Psi) < \eps $. Therefore, using (\ref{eq:Phi-phi}), the triangle inequality and the duality property, we get 
\begin{align*}
  \gamma(\phi) = \tfrac12\gamma(\Phi) \leq \tfrac12\gamma(\Phi\circ\Psi)+\tfrac12\gamma(\Psi^{-1}) \leq \tfrac\eps2 + \tfrac\eps2 = \eps.
\end{align*}
This shows the continuity of $\gamma$ at identity.

We now turn to the proof of Lemma \ref{lemma:trick}.

\begin{proof} Let $\eps>0$, and let $B $ be a non empty open ball in $M$.

The following claim asserts the existence of a convenient Hamiltonian diffeomorphism which switches coordinates on a small open set.

\begin{claim}\label{claim:switch-coord} There exists a non empty open ball $B''\subset B$ and a Hamiltonian diffeomorphism $f$ on $M\times M$, such that:
  \begin{itemize}
  \item $f$ is the time-1 map of a Hamiltonian supported in $B\times B$,
  \item for all $(x,y)\in B''\times B''$, we have $f(x,y)=(y,x)$.
  \end{itemize}
\end{claim}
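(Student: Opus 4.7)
The plan is to work inside a Darboux chart and reduce to a construction in Euclidean space. Since $B$ is a non-empty open ball in $M$, it contains a smaller closed ball $\overline{B_0}$ lying inside a Darboux chart, via which I identify $B_0$ with a round Euclidean ball $B(0,R)\subset(\R^{2n},\omega_0)$. Any Hamiltonian on $(\R^{4n},\omega_0\oplus\omega_0)$ compactly supported in $B(0,R)\times B(0,R)$ pulls back and extends by zero to a Hamiltonian on $M\times M$ supported in $B_0\times B_0\subset B\times B$. Thus it suffices to construct $f$ on $\R^{4n}$, compactly supported in $B(0,R)\times B(0,R)$ and equal to $S(x,y)=(y,x)$ on some smaller product $B''\times B''$.

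Next I would exhibit an explicit quadratic Hamiltonian realizing the swap. Pass to the linear symplectic coordinates
\[ u=\tfrac{1}{\sqrt 2}(q_1-q_2),\; v=\tfrac{1}{\sqrt 2}(p_1-p_2),\; U=\tfrac{1}{\sqrt 2}(q_1+q_2),\; V=\tfrac{1}{\sqrt 2}(p_1+p_2), \]
in which $\omega_0\oplus\omega_0=du\wedge dv+dU\wedge dV$ and $S$ becomes $(u,v,U,V)\mapsto(-u,-v,U,V)$. The time-independent Hamiltonian $H_0=\tfrac{\pi}{2}(|u|^2+|v|^2)$ generates the flow $\Psi^t$ which rotates the $(u,v)$-variables by angle $\pi t$ and fixes $(U,V)$; in particular $\Psi^1=S$. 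Being linear, $\Psi^t$ satisfies $\|\Psi^t z\|\leq C\|z\|$ for some $C\geq 1$ and every $t\in[0,1]$, so as soon as $B''=B(0,r)$ has radius $r<R/C$, the swept-out compact set $K=\bigcup_{t\in[0,1]}\Psi^t(\overline{B''}\times\overline{B''})$ lies inside $B(0,R)\times B(0,R)$.

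Finally I would apply the standard cutoff trick to localize $\Psi^t$. Choose a smooth function $\chi\colon\R^{4n}\to[0,1]$ which is identically $1$ on an open neighborhood of $K$ and compactly supported in $B(0,R)\times B(0,R)$, and set $\tilde H=\chi H_0$. The flow $\tilde\Psi^t$ of $\tilde H$ is globally defined and compactly supported in $B(0,R)\times B(0,R)$. For any $z\in B''\times B''$, the $\Psi$-trajectory $t\mapsto\Psi^t(z)$ lies in $K$, where $\chi\equiv 1$ and $d\chi\equiv 0$, so $X_{\tilde H}=\chi X_{H_0}+H_0 X_\chi=X_{H_0}$ along it; by uniqueness of ODE solutions the same curve also solves $\dot\gamma=X_{\tilde H}(\gamma)$ with $\gamma(0)=z$, hence equals $\tilde\Psi^t(z)$ for every $t\in[0,1]$. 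Taking $f:=\tilde\Psi^1$ gives both required properties: $f$ is a Hamiltonian diffeomorphism supported in $B_0\times B_0\subset B\times B$, and $f(x,y)=S(x,y)=(y,x)$ for $(x,y)\in B''\times B''$.

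The only slightly delicate point is choosing $B''$ small enough that the entire trajectory set $K$ sits inside the future cutoff region; once that is arranged by the linear estimate $\|\Psi^t\|\leq C$, the identity $d\chi|_K\equiv 0$ guarantees that the cutoff does not disturb the prescribed swap on $B''\times B''$. Everything else is routine.
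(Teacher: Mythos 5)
Your proof is correct and reaches the same endpoint by a slightly more explicit path. The paper appeals to connectedness of $\mathrm{Sp}(4n,\R)$ to produce \emph{some} path $A^t$ from $\id$ to the swap map, then invokes the existence of a generating time-dependent quadratic Hamiltonian $Q_t$ before cutting off. You instead exhibit a concrete autonomous quadratic generator: after the linear symplectic change of variables $u=\tfrac{1}{\sqrt2}(q_1-q_2)$, $v=\tfrac{1}{\sqrt2}(p_1-p_2)$, $U,V$ the sums, the swap becomes $-\id$ on $(u,v)$ and $\id$ on $(U,V)$, and $H_0=\tfrac\pi2(|u|^2+|v|^2)$ generates a rotation reaching it at time $1$. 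This buys you a time-independent Hamiltonian and an explicit flow that is in fact a Euclidean isometry, which makes the containment estimate for the swept-out set $K$ trivial (you could even take $C=1$; the written constant is off by at most a benign $\sqrt2$ depending on how one compares the product ball $B(0,R)\times B(0,R)$ with balls in $\R^{4n}$, but this does not affect the argument). The cutoff step is the same standard trick in both proofs, and your justification that $X_{\tilde H}=X_{H_0}$ along trajectories starting in $B''\times B''$, via $d\chi\equiv0$ on $K$, is exactly what is needed.
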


\begin{proof} Using a Darboux chart and shrinking $B$ if needed, we may assume without loss of generality that $B$ is a neighborhood of 0 in $\R^{2n}$. Since the space $\mathrm{Sp}(4n,\R)$ of symplectic matrices of $\R^{4n}\simeq \R^{2n}\times\R^{2n}$ is connected, we can choose a path $(A^t)_{t\in[0,1]}$ of such matrices such that $A^0=\id$ and $A^1$ is the linear map $(x,y)\mapsto (y,x)$. Let $B''$ be a small ball containing 0, such that for all $t\in[0,1]$, the closure of $A^t(B''\times B'')$ is included in $B \times B$. Let $Q_t(x)$ be a generating (quadratic) Hamiltonian for $A^t$ and let $\rho$ be a cut-off function supported in $B\times B$ and taking value 1 on $\bigcup_{t\in[0,1]}A^t(B''\times B'')$. The Hamiltonian $F_t(x)=\rho(x) Q_t(x)$ generates a flow which coincides with $A^t$ on $B''\times B''$. Thus, its time-one map $f=\phi_F^1$ suits our needs.
\end{proof}

For the rest of the proof of Lemma \ref{lemma:trick}, we pick a ball $B''$ and a Hamiltonian diffeomorphism $f$ as provided by Claim \ref{claim:switch-coord}. 
Let $B'$ be a ball whose closure is included in $B''$, let $\Upsilon=\phi\times\id_M$ and let \[\Psi=\Upsilon^{-1}\circ f^{-1}\circ \Upsilon\circ f.\] 
If $\phi$ tends to $\id_M$, then $ \Phi $ and $\Psi$ converge to $\id_{M\times M}$, which shows property (ii).

Now, assume that $\phi$ is close enough to $\id_M$ so that $ \Upsilon^{-1}(\supp f) \subset B \times B $. 
Then we have $ \supp \Upsilon^{-1} \circ f^{-1} \circ \Upsilon \subset B \times B $, and since we moreover have 
$ \supp f \subset B \times B $, we conclude that $ \supp \Psi \subset B \times B $. Assume now that $ \phi $ is close enough to $\id_M$ so that in addition we have $\phi(B')\subset B''$. Then for all $(x,y)\in B\times B$, we have
\begin{align*}
  \Phi\circ\Psi(x,y)&= \Phi\circ\Upsilon^{-1}\circ f^{-1}\circ \Upsilon\circ f(x,y)\\
                    &= \Phi\circ\Upsilon^{-1}\circ f^{-1}\circ \Upsilon(y,x)\\
                    &= \Phi\circ\Upsilon^{-1}\circ f^{-1}(\phi(y),x)\\
                    &= \Phi\circ\Upsilon^{-1}(x,\phi(y))\\
  &= \Phi(\phi^{-1}(x),\phi(y))=(x,y).
\end{align*}
Thus, $\Phi\circ\Psi$ coincides with the identity on $B\times B$. This establishes Property (i).
\end{proof}

\begin{remark} \label{rem:Lipschitz}
A slight modification of the proofs of Lemmas \ref{lemma:variant-sword} and \ref{lemma:trick} can show that in fact, $ \gamma $ is locally Lipschitz continuous with respect to the $ C^0 $ metric on $ \Ham(M,\omega) $, that is, we have $ \gamma(\phi) \leqslant C d_{C^0}(\phi,\id_M)$ for every $ \phi $ which is $C^0$-close enough to the identity, where $ C = C(M,\omega) $. 

Indeed, in Lemma  \ref{lemma:variant-sword}, by picking a Morse function $ H : M \rightarrow \R $ with $ \max H - \min H < 1 $, whose critical points all lie in $ U $, we can always take the function $ F $ in the proof of the lemma to be of the form $ F = \eps H $. This shows that in the lemma, for small enough $ \eps $ we can take $ \Delta = c\eps $ where $ c = c(M,\omega) $. Secondly, it is easy to see that also in Lemma \ref{lemma:trick}, for small enough $ \Delta $ we can take $ \delta = C \Delta $, where $ C = C(M,\omega,B) $. Indeed, in the proof of the lemma we have defined $ \Psi=\Upsilon^{-1}\circ f^{-1}\circ \Upsilon\circ f $ which means that $ d_{C^0}(\Psi,\id_M) \leqslant d_{C^0}(\Upsilon^{-1},\id_M) +d_{C^0}(f^{-1}\circ \Upsilon\circ f,\id_M) \leqslant C d_{C^0}(\phi,\id_M) $ where $ C $ depends only on the Lipschitz constants of the diffeomorphism $ f $ and of its inverse.

Indeed, in Lemma  \ref{lemma:variant-sword}, by picking a Morse function $ H : M \rightarrow \R $ with $ \max H - \min H < 1 $, all of whose critical points lie in $ U $, we can always take the function $ F $ in the proof of the lemma to be of the form $ F = \eps H $. This shows that in the lemma, for small enough $ \eps $ we can take $ \Delta = c\eps $ where $ c = c(M,\omega) $. Secondly, it is easy to see that also in Lemma \ref{lemma:trick}, for small enough $ \Delta $ we can take $ \delta = C \Delta $, where $ C = C(M,\omega,B) $. Indeed, in the proof of the lemma we have defined $ \Psi=\Upsilon^{-1}\circ f^{-1}\circ \Upsilon\circ f $ which means that $ d_{C^0}(\Psi,\id_M) \leqslant d_{C^0}(\Upsilon^{-1},\id_M) +d_{C^0}(f^{-1}\circ \Upsilon\circ f,\id_M) \leqslant C d_{C^0}(\phi,\id_M) $ where $ C $ depends only on the Lipschitz constants of the diffeomorphism $ f $ and of its inverse.
\end{remark}

\medskip
We have proved that $\gamma$ is continuous at the identity with respect to the $C^0$-norm. To achieve the proof of Theorem \ref{theo:gamma}, we need the inequality provided by the next lemma.

\begin{lemma}\label{lemma:continuity_gamma} For all homology classes $a,b\neq 0$ and all Hamiltonian diffeomorphisms $\phi,\psi$, we have:
   \begin{equation*}
| \gamma(a,b;\phi) - \gamma(a,b;\psi) | \leq \gamma(\psi^{-1} \circ \phi).
\end{equation*}
\end{lemma}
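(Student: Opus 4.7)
The plan is to derive the inequality from the triangle inequality (Property 2) and duality (Property 4) for spectral invariants.

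Let $K$ be a Hamiltonian generating $\psi$ and let $L$ generate $\psi^{-1}\circ\phi$, so that $K\#L$ generates $\phi = \psi \circ (\psi^{-1}\phi)$. Since spectral invariants of two Hamiltonians generating the same time-one map differ by a global constant independent of the homology class (see~\eqref{eq:invariance_spec}), we may use $K\#L$ and $K$ to compute the differences $\gamma(a,b;\phi)$ and $\gamma(a,b;\psi)$ respectively. Hence
\[
\gamma(a,b;\phi) - \gamma(a,b;\psi) = \bigl[c(a,K\#L)-c(a,K)\bigr] - \bigl[c(b,K\#L)-c(b,K)\bigr].
\]

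The first step is to bound $c(a,K\#L)-c(a,K)$ from above. Since $a\cap [M]=a$, Property 2 of Proposition~\ref{prop:sepc_inv} applied to the decomposition $K\#L$ yields
\[
c(a,K\#L) \leq c(a,K) + c([M],L).
\]
The second (and more delicate) step is to bound $c(b,K\#L)-c(b,K)$ from below. For this, I will use that $(K\#L)\#\bar L$ is literally equal to $K$ as a time-dependent Hamiltonian (a short direct computation using the definitions of $\#$ and $\bar{\cdot}$). Applying Property 2 to this decomposition of $K$, and using $b\cap[M]=b$, gives
\[
c(b,K) = c(b,(K\#L)\#\bar L) \leq c(b,K\#L) + c([M],\bar L).
\]
By the duality Property 4, $c([M],\bar L) = -c([pt],L)$, and therefore
\[
c(b,K\#L) \geq c(b,K) + c([pt],L).
\]

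Subtracting the two estimates gives
\[
\gamma(a,b;\phi) - \gamma(a,b;\psi) \leq c([M],L) - c([pt],L) = \gamma([M],[pt];\psi^{-1}\phi) = \gamma(\psi^{-1}\phi).
\]
For the reverse inequality, swap the roles of $\phi$ and $\psi$ in the same argument, obtaining
\[
\gamma(a,b;\psi) - \gamma(a,b;\phi) \leq \gamma(\phi^{-1}\psi),
\]
and then apply the duality property $\gamma(g^{-1}) = \gamma(g)$ of the spectral norm to conclude $\gamma(\phi^{-1}\psi)=\gamma(\psi^{-1}\phi)$. Combining the two inequalities gives the required bound.

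The only mildly subtle point is the lower bound on $c(b,K\#L)$: it requires combining the triangle inequality in the ``wrong'' direction with the duality formula $c([M],\bar L) = -c([pt],L)$, and relies on the identity $(K\#L)\#\bar L = K$, which I will verify by a direct calculation from the definitions of $\#$ and $\bar{\cdot}$. Everything else is bookkeeping with the properties listed in Proposition~\ref{prop:sepc_inv}.
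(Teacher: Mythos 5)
Your proof is correct and follows essentially the same approach as the paper's: both derive the bound from the triangle inequality together with the duality $c([M],\bar G)=-c([pt],G)$, applied to the decomposition of $\phi$ as $\psi\circ(\psi^{-1}\phi)$. The only cosmetic difference is that the paper sandwiches $c(a,H)$ and $c(b,H)$ between two bounds and subtracts, whereas you establish one side and then swap $\phi\leftrightarrow\psi$ and invoke $\gamma(g^{-1})=\gamma(g)$; the underlying identities (e.g.\ $(K\#L)\#\bar L = K$) are the same ones used implicitly in the paper.
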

\begin{proof}
Let $H$ and $F$ be Hamiltonians the time-1 maps of whose flows are $\phi$ and $\psi$, respectively. Then, for any classes $ a , b \in H_*(M) $, we have by the triangle inequality: 
\begin{equation*}
c(a,H) = c(a \cap [M], F \#  (\bar{F} \# H)) \leq c(a,F) + c ([M], \bar{F} \# H).
\end{equation*}
Similarly, $c(a,F)  \leq c(a,H) + c ([M], \bar{H} \# F)$.
Now, by Proposition \ref{prop:sepc_inv}, we have $ c ([M], \bar{H} \# F) = - c([pt],\bar{F} \# H) $.  Thus, we conclude
\begin{equation} \label{eq:theorem-gamma-1}
 c(a,F) + c ([pt],\bar{F} \# H ) \leqslant  c(a, H) \leqslant c(a,F) + c ([M], \bar{F} \# H ).
\end{equation}
Hence,  we also have 
\begin{equation} \label{eq:theorem-gamma-2}
 c(b,F) + c ([pt], \bar{F} \# H ) \leqslant  c(b,H) \leqslant c(b,F) + c ([M], \bar{F} \# H).
\end{equation}
Subtracting \eqref{eq:theorem-gamma-2} from  \eqref{eq:theorem-gamma-1} we obtain 
\begin{align*} 
(c(a,F) &- c(b,F)) - \left( c ([M], \bar{F} \# H ) - c ([pt], \bar{F} \# H) \right) \\
&\leqslant  c(a, H) - c(b,H)\\ &\leqslant \left( c(a,F) - c(b,F) \right) + \left( c ([M], \bar{F} \# H) - c ([pt], \bar{F} \# H) \right),
\end{align*}
which simplifies to
\begin{equation*}
| \gamma(a,b;\phi) - \gamma(a,b;\psi) | \leqslant \gamma(\psi^{-1} \circ \phi).
\end{equation*}
\end{proof}

Lemma \ref{lemma:continuity_gamma} implies that the function $\gamma(a,b; \cdot)$ is continuous at every element $\phi\in \Ham(M,\omega)$ and extends to Hamiltonian homeomorphisms by continuity.

To prove this last fact, let $\phi_i\in \Ham(M,\omega)$ be a sequence which $C^0$-converges to a homeomorphism $\phi$. Then, $\phi_i$ is a Cauchy sequence for the $C^0$-distance. Thus, for all $\eps$, there exists a positive integer $N$ such that for all $i,j>N$, $\phi_i^{-1}\circ\phi_j$ is $C^0$ close enough to $\id_M$ that the above implies $\gamma(\phi_i^{-1}\circ\phi_j)<\eps$. This means that $\phi_i$ is a Cauchy sequence for $\gamma$. Lemma \ref{lemma:continuity_gamma} then gives $|\gamma(a,b;\phi_j)-\gamma(a,b;\phi_i)|<\eps$. Therefore $\gamma(a,b;\phi_i)$ is a Cauchy sequence in $\R$, hence converges. Moreover, if $\phi_i'$ is another sequence which $C^0$ converges to $\phi$, then  $\phi_i^{-1}\circ\phi_i'$ converges to $\id_M$ for the $C^0$ distance, and Lemma \ref{lemma:continuity_gamma} again shows that the limits $\gamma(a,b;\phi_i)$ and $\gamma(a,b;\phi_i')$ are the same. 

The observations of the last paragraph allow to define $\gamma(a,b;\phi)$ for a Hamiltonian homeomorphism $\phi$ as the limit of $\gamma(a,b;\phi_i)$ for any sequence $\phi_i$ which $C^0$-converges to $\phi$. They also imply that the so-defined map $\gamma(a,b;\cdot)$ is continuous on $\overline{Ham}(M,\omega)$ for the $C^0$ topology. 

\begin{remark}\label{remark:proof-non-asperical} We briefly explain now why the above proof can be adapted to general (non necessarily aspherical) closed connected symplectic manifolds, to prove the statement in Remark \ref{remark:non-aspherical}.

  The only part of the proof where symplectic asphericity is required is Lemma \ref{lemma:variant-sword}. However a variant of it was proved in \cite{Sey13} and holds on any closed symplectic manifold:

  \medskip
  \noindent\emph{Let $(M,\omega)$ be a closed symplectically aspherical manifold, let $U$ be a connected  open subset in $M$ and let $(\phi^t)_{t\in[0,1]}\in\mathcal{P}\Ham(M,\omega)$ satisfying:
  \begin{equation*}
  \forall x\in U,\forall t\in[0,1]\ \phi^t(x)=x.
  \end{equation*}
  Then, for every $\eps>0$, there exists $\delta>0$ such that if $d_{C^0}(\phi^t,\id)< \delta$ for all $t\in[0,1]$, then $\gamma((\phi^t)_{t\in[0,1]})< \eps$.}
  \medskip

  The rest of the proof goes through by decorating all our maps with superscripts $t$. More precisely, given a Hamiltonian isotopy $(\phi^t)$, introduce $\Phi^t=\phi^t\times(\phi^t)^{-1}$. The proof then applies almost verbatim. For instance Lemma \ref{lemma:trick} can be adapted so that under the assumption that $d_{C^0}(\phi^t,\id_M)<\delta$ for all $t$, we get a Hamiltonian isotopy $(\Psi^t)$ satisfying:
    \begin{enumerate}[(i)]
  \item $ \supp \Psi^t \subset B \times B $ and $ \supp \Phi^t \circ \Psi^t \subset M \times M \setminus B' \times B' $ for all $ t \in [0,1] $,
  \item $d_{C^0}(\Psi^t,\id_{M\times M}), d_{C^0}(\Phi^t \circ \Psi^t,\id_{M\times M})<\Delta$ for all $ t \in [0,1] $.
  \end{enumerate}
\end{remark}

\section{Proof of Theorem \ref{theo:main}: The generalized Arnold conjecture}\label{sec:Arnold_conj_proof}
We will in fact show the following result, which immediately implies Theorem \ref{theo:main}. Our proof  is a generalization of the one presented in the smooth case in \cite{Howard2012}.

\begin{theo}\label{theo:rigid-ham}
Let $(M, \omega)$ denote a closed, connected and symplectically aspherical  manifold, and let $\phi \in \overline{Ham}(M,\omega)$. If there exist $\alpha,\beta \in  H_*(M) \setminus \{0\}$ with $\mathrm{deg}(\beta)< \dim(M)$, such that $\gamma(\alpha,\alpha \cap \beta; \phi) = 0$, then the set of fixed points of $\phi$ is homologically non-trivial.  
\end{theo}

To prove the above theorem we will need to recall certain aspects of  Lusternik--Schnirelmann theory, which will be done in the next section.   

\subsection{Preparation for the proof: min-max critical values}\label{sec:LS_theory}
Let $M$ be a closed and connected smooth manifold.  Denote by $f \in C^{\infty}(M)$ a smooth function on $M$ and for any $a \in \R$, let $M^{a} = \{x \in M: f(x) < a \}$.   Recall that the inclusion $i_a : M^a \hookrightarrow M$ induces a map $i_a^*: H_*(M^a) \rightarrow H_*(M)$.  Let 
$\alpha \in H_*(M)$ be a non-zero singular homology class and define 
$$\cLS(\alpha,f) := \inf \{a \in \R: \alpha \in \mathrm{Im} (i_a^*) \}.  $$
Note that the numbers $\cLS(\alpha,f)$ already appeared in \ref{prop:sepc_inv}. They are critical values of $f$ and such critical values are often referred to as \emph{homologically essential} critical values. The function $\cLS  : H_*(M) \setminus \{0\} \times C^{\infty}(M) \rightarrow \R$ is often called a \emph{min-max} critical value selector.
In the following proposition $[M]$ denotes the fundamental class of $M$ and $[pt]$ denotes the class of a point.

\begin{prop}\label{prop:cLS}
The min-max critical value selector $\cLS$ possesses the following properties:
\begin{enumerate}

\item $\cLS(\alpha, f)$ is a critical value of $f$,

\item $\min(f) = \cLS([pt],f) \leq \cLS(\alpha,f) \leq \cLS([M],f) = \max(f)$, 

\item $\cLS(\alpha\cap\beta, f) \leq \cLS(\alpha, f)$, for any $\beta \in H_*(M)$ such that $\alpha\cap\beta\neq 0$,

\item Suppose that $\deg(\beta)<\dim(M)$ and  $\cLS( \alpha \cap \beta, f) = \cLS (\alpha, f)$. Then, the set of critical points of $f$ with critical value $\cLS (\alpha, f)$ is homologically non-trivial.
\end{enumerate}
\end{prop}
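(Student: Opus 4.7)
The statement collects four independent properties. My plan is to handle (1) and (2) directly from the min-max definition together with the standard deformation lemma, (3) via naturality of the cap product under Poincar\'e duality, and (4) --- the main obstacle --- by a Lusternik--Schnirelmann style argument that exploits $\deg \mathrm{PD}(\beta) > 0$.

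For (1), if $c = \cLS(\alpha, f)$ were a regular value, the deformation lemma applied to $-\nabla f$ would produce $\eps > 0$ and an isotopy pushing $M^{c+\eps}$ into $M^{c-\eps}$; any class lifting to $M^{c+\eps}$ would then lift to $M^{c-\eps}$, contradicting the infimum. For (2), one checks that $[pt]$ lifts to $M^a$ iff $M^a \neq \emptyset$, giving $\cLS([pt], f) = \min f$, while $H_n(M^a) = 0$ for any proper open subset of a closed connected $n$-manifold, so $[M]$ lifts to $M^a$ only for $a > \max f$, giving $\cLS([M], f) = \max f$; the intermediate inequalities are immediate. For (3), writing $u = \mathrm{PD}(\beta)$ so that $\alpha \cap \beta = \alpha \frown u$, and fixing $a > \cLS(\alpha, f)$ with lift $\tilde\alpha \in H_*(M^a)$, naturality of the cap product yields $\alpha \cap \beta = i_{a*}(\tilde\alpha \frown i_a^* u)$, so $\alpha \cap \beta$ also lifts to $M^a$.

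For (4), I argue by contradiction. Suppose $c := \cLS(\alpha, f) = \cLS(\alpha \cap \beta, f)$ but that the critical set $K_c$ at level $c$ is homologically trivial: there is an open neighborhood $U \supset K_c$ with $i_* : H_j(U) \to H_j(M) = 0$ for all $j > 0$. Universal coefficients over the field $\F$ dualize this to $i^* : H^j(M) \to H^j(U) = 0$ for $j > 0$; since $\deg \beta < \dim M$ forces $\deg u > 0$, we conclude $u|_U = 0$. A standard deformation argument for $-\nabla f$ with cut-off near $K_c$ produces $\eps > 0$ and a retraction of $M^{c+\eps}$ into $M^{c-\eps} \cup U$, so $\alpha$ lifts to a class $\bar\alpha \in H_*(M^{c-\eps} \cup U)$.

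The key computation is then at the chain level. I choose a Mayer--Vietoris decomposition $\bar\alpha = \bar\alpha_1 + \bar\alpha_2$ with $\bar\alpha_1$ supported in $M^{c-\eps}$ and $\bar\alpha_2$ supported in $U$, and a cochain $v$ on $U$ with $\delta v = u|_U$. The Leibniz rule for $\frown$ yields an identity of the form
\[
\bar\alpha \frown u \,=\, \bar\alpha_1 \frown u \,\pm\, \partial\bar\alpha_2 \frown v \,\mp\, \partial\!\left(\bar\alpha_2 \frown v\right),
\]
and since $\partial \bar\alpha_2 = -\partial \bar\alpha_1$ lies in $M^{c-\eps} \cap U \subset M^{c-\eps}$, this exhibits $\bar\alpha \frown u$ as homologous in $M^{c-\eps} \cup U$ to a cycle supported in $M^{c-\eps}$. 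Pushing forward by naturality, $\alpha \cap \beta$ lifts to $M^{c-\eps}$, contradicting $\cLS(\alpha \cap \beta, f) = c$. The main obstacle is verifying this chain-level identity cleanly; the hypothesis $\deg u > 0$ enters exactly once, to guarantee via universal coefficients that $u$ restricts trivially to the neighborhood $U$.
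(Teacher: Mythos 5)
Your argument is correct, but it takes a genuinely different and more complete route than the paper.  The paper explicitly declines to prove Proposition \ref{prop:cLS} in general, citing \cite{LS, cornea-lupton-oprea, viterbo}, and only sketches point (4) in the single case $\alpha = [M]$, which is all that is needed for Theorem \ref{theo:rigid-ham}.  That sketch is elementary: since $\cLS(\beta,f)=\max f$ means every cycle representing $\beta$ must meet $f^{-1}(\max f)$, a Mayer--Vietoris type ``$H_*(M)$ is generated by $H_*(U)$ and $H_*(M\setminus f^{-1}(\max f))$'' claim forces $i_*:H_*(U)\to H_*(M)$ to be nonzero; the hypothesis $\deg\beta<\dim M$ is used only implicitly there.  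You instead prove (4) for arbitrary $\alpha$, via the Lusternik--Schnirelmann deformation lemma combined with the chain-level cap-product identity, and you make the role of $\deg\beta<\dim M$ transparent: it is exactly what guarantees $\deg u = \dim M - \deg\beta>0$, hence $u|_U=0$ via universal coefficients when $U$ is a homologically trivial neighborhood.  This is the classical argument from the references, and it is both more general and more rigorous than the paper's sketch (the paper's ``generated by'' step is not literally true in all degrees, which is precisely why the degree hypothesis matters; your version sidesteps this by working with a primitive $v$ of $u|_U$ and pushing $\bar\alpha\frown u$ into $M^{c-\eps}$).  Points (1)--(3) are handled by standard means in both: deformation lemma for (1), the lifting description of $\cLS([pt],\cdot)$ and $\cLS([M],\cdot)$ for (2), and naturality of $\frown$ for (3), all of which match the classical treatment the paper points to.  One small stylistic remark: where you write ``$i_*:H_j(U)\to H_j(M)=0$'', you mean the map is zero, not that the target vanishes; it would be clearer to phrase this as ``$i_*$ is the zero map for all $j>0$'', and similarly for $i^*$.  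The deformation-lemma step also produces a homotopy, not a retraction, of $M^{c+\eps}$ into $M^{c-\eps}\cup U$; that is all you need, since $\eta_1$ is homotopic to the identity in $M$ and is the identity on $M^{c-\eps}$.
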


The above are well-known results from Lusternik-Schnirelmann theory and hence we will not present a proof here. For details, we refer the reader to \cite{LS, cornea-lupton-oprea, viterbo}.

However, for the reader's convenience, we briefly sketch below the proof of the fourth property, in the case $\alpha=[M]$ (which is the only case that we will be using).

\begin{proof}[Proof of Prop \ref{prop:cLS}, point 4. (in the case where $\alpha$ is the fundamental class)]

Since $\cLS([M],f) = \max(f)$, we want to prove that, if $\cLS( \beta, f) = \max(f)$, then the set of points where $f$ reaches its maximum is homologically non-trivial.
Let $\sigma$ be a cycle which represents $\beta$. By definition, the maximum of $f$ on the support of $\sigma$ is at least $c_{LS}(\beta,f)$, which is nothing but $\max(f)$. Thus, $f$ attains its maximum on $\sigma$. We deduce that there is no cycle representing $\beta$ and supported in $M\setminus f^{-1}(\max(f))$. For every neighborhood $U$ of $f^{-1}(\max(f))$, the homology $H_*(M)$ is generated by the homologies  $H_*(U)$ and  $H_*(M\setminus f^{-1}(\max(f)))$. Since $\beta$ cannot be represented in  $M\setminus f^{-1}(\max(f))$, this implies that $U$ has non trivial homology.
\end{proof}

\subsection{The proof}
In this section, we provide the proof of Theorem \ref{theo:rigid-ham}, which immediately implies Theorem \ref{theo:main}.
\begin{proof}[Proof of Theorem \ref{theo:rigid-ham}]
Let $U$ be any  open neighborhood of the fixed-point set of $\phi$. We will show that $\bar{U}$, the closure of $U$, is homologically non-trivial. This clearly implies the theorem.

Let $\phi_i$ be any  sequence of Hamiltonian diffeomorphisms $C^0$-converging to $\phi$, and  for each $i$, we pick a Hamiltonian $H_i$ whose time-one map is $\phi_i$. 
Theorem \ref{theo:gamma} implies that $\gamma(\alpha\cap\beta,\alpha;\phi_i)$ converges to 0 as $i$ goes to $\infty$. Denote by $f : M \rightarrow \R$ a smooth function such that $f = 0$ on $\bar{U}$ and $f < 0$ on $M \setminus \bar{U}$. 

\begin{claim}\label{cl:claim1}
For any $a \in H_*(M)\setminus \{0\}$, there exists $\eps_0>0$ and an integer $i_0$ such that for any $0<\eps\leq \eps_0$ and any $i\geq i_0$, $$c(a, H_i \# \eps f) = c(a, H_i).$$   
\end{claim}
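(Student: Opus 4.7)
The plan is to interpolate between $H_i$ and $H_i \# \eps f$ via the family $s \mapsto H_i \# s\eps f$, $s \in [0,1]$, and to argue that the spectral invariant $c(a, H_i \# s\eps f)$ is constant along this path. The key structural observation will be that, for $\eps$ small and $i$ large enough, $\Spec(H_i \# s\eps f) \subset \Spec(H_i)$ for every $s \in [0,1]$. Granted this, since $s \mapsto c(a, H_i \# s\eps f)$ is continuous by the Hofer Lipschitz property (item (3) of Proposition \ref{prop:sepc_inv}, applied to two Hamiltonians whose Hofer distance is bounded by $|s-s'|\eps (\max f - \min f)$), and $\Spec(H_i)$ has Lebesgue measure zero and is therefore totally disconnected in $\R$, the continuous function must be constant. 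Comparing $s=0$ and $s=1$ then yields $c(a, H_i \# \eps f) = c(a, H_i)$.

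The spectrum inclusion will be obtained by localizing the $1$-periodic orbits of $H_i \# s\eps f$ inside $\bar U$. Since $M \setminus U$ is compact and contains no fixed points of $\phi$, there exists $\delta > 0$ such that $d(\phi(x), x) \geq \delta$ for every $x \in M \setminus U$. Using uniform continuity of $\phi$, the convergence $\phi_i \to \phi$ in $C^0$, and the fact that $\phi^1_{s\eps f}$ is arbitrarily $C^0$-close to $\id$ uniformly in $s \in [0,1]$ as $\eps \to 0$, a triangle inequality argument shows that any fixed point $x$ of $\phi_i \circ \phi^1_{s\eps f} = \phi^1_{H_i \# s\eps f}$ satisfies $d(\phi(x), x) < \delta$, and hence must lie in $U$.

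Next, since $f$ attains its maximum value $0$ on $\bar U$, one has $df \equiv 0$ and therefore $X_f \equiv 0$ on $\bar U$. Thus $\phi^t_{s\eps f}$ fixes $\bar U$ pointwise, and every fixed point $x \in U$ of $\phi_i \circ \phi^1_{s\eps f}$ is automatically a fixed point of $\phi_i$. The corresponding $1$-periodic orbit of $H_i \# s\eps f$ is then $z(t) = \phi^t_{H_i}(\phi^t_{s\eps f}(x)) = \phi^t_{H_i}(x)$, the ordinary orbit of $H_i$ through $x$. Using the identity $(\phi^t_{H_i})^{-1}(z(t)) = x$ together with $f(x) = 0$, a direct calculation gives $(H_i \# s\eps f)(t, z(t)) = H_i(t, z(t)) + s\eps f(x) = H_i(t, z(t))$. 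Hence the actions of $z$ (with any capping disk) under $H_i \# s\eps f$ and under $H_i$ coincide, which gives the desired inclusion $\Spec(H_i \# s\eps f) \subset \Spec(H_i)$.

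The main obstacle is the uniform localization of fixed points in the second step: it crucially combines the compactness of $M \setminus U$ with the interplay between $\eps$, the $C^0$-distance $d_{C^0}(\phi_i, \phi)$, and the uniform continuity modulus of $\phi$, so that $\eps_0$ and $i_0$ can be chosen independently of $s \in [0,1]$. Once this localization is in place, the action computation is a straightforward identity, and the total-disconnectedness argument packages everything cleanly into the equality $c(a, H_i \# \eps f) = c(a, H_i)$.
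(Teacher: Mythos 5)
Your argument is correct and follows the same overall strategy as the paper: interpolate along the family $s\mapsto H_i\#s\eps f$, localize all $1$-periodic orbits inside $\bar U$ using the uniform $\delta$-shift away from $U$ together with a triangle inequality (so that $\eps_0$ and $i_0$ can be chosen independently of $s\in[0,1]$), show that the corresponding actions agree with those of $H_i$, and then conclude constancy of $s\mapsto c(a,H_i\#s\eps f)$ from the Hofer--Lipschitz estimate and the fact that $\Spec(H_i)$ has measure zero, hence is totally disconnected. The place where your proof takes a genuinely simpler route is the action coincidence. The paper replaces $H_i\#s\eps f$ by a time-reparametrized concatenated Hamiltonian $K_{i,\eps}$ (first run the flow of $s\eps f$, then that of $H_i$), argues that these two Hamiltonians generate homotopic paths with fixed endpoints, invokes the fact that on aspherical manifolds a contractible normalized Hamiltonian loop has action spectrum $\{0\}$ to conclude the actions agree, and only then computes the action for $K_{i,\eps}$. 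You instead evaluate the defining formula $(H\#G)(t,x)=H(t,x)+G\bigl(t,(\phi^t_H)^{-1}(x)\bigr)$ directly along the orbit $z(t)=\phi^t_{H_i}(x)$: since $(\phi^t_{H_i})^{-1}(z(t))=x$ and $f(x)=0$ the $s\eps f$-term drops out pointwise, giving $\mathcal{A}_{H_i\#s\eps f}(z)=\mathcal{A}_{H_i}(z)$ with no appeal to path homotopies or loop normalization. This is a clean and self-contained shortcut that buys exactly the same inclusion $\Spec(H_i\#s\eps f)\subset\Spec(H_i)$. (As a minor remark, your localization step uses the uniform continuity of the homeomorphism $\phi$, whereas the paper instead applies the $\tfrac{\delta}{2}$-shift to the point $\phi_f^{s\eps}(x)\notin U$; both are valid, since $M$ is compact.)
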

\begin{proof} 
 Let $\delta>0$ be such that $d(\phi(x),x)>\delta$ for all $x\notin U$. The map $\phi$ is the $C^0$-limit of the sequence $\phi_i=\phi_{H_i}^1$, hence there exists some large integer $i_0$ such that:
$$d(\phi_{H_i}^1(x),x)>\tfrac\delta2, \quad\text{for all }x\notin U \text{  and } i \geqslant i_0 .$$
Now take $\eps>0$ so small that $d_{C^0}(\phi_f^{s\eps},\id)<\frac\delta 2$, for all $s\in[0,1]$. If $x$ does not belong to $U$, neither does  $\phi_f^{s\eps}(x)$. Thus, for all $x\notin U$, 
\begin{align*}d(\phi_{H_i}^1\circ\phi_f^{s\eps}(x),x)\geq d(\phi_{H_i}^1\circ\phi_f^{s\eps}(x),\phi_f^{s\eps}(x))-d(\phi_f^{s\eps}(x),x)>\tfrac\delta2-\tfrac\delta2=0.
\end{align*}
In words, $\phi_{H_i}^1\circ\phi_f^{s\eps}$ has no fixed point in $M\setminus U$. Since $f=0$ on $U$, we deduce that the fixed points of $\phi_{H_i}^1\circ\phi_f^{s\eps}$ are the same as those of $\phi_{H_i}^1$. 

Moreover, the actions of the corresponding orbits coincide. Indeed, to see this fact, note that $\phi_{H_i}^1\circ\phi_f^{s\eps}$ can also be generated by the ``concatenated'' Hamiltonian: 
\begin{equation}\label{eq:Ham-concat}
K_{i,\eps}(t,x)=\begin{cases}\rho'(t)s\eps f(x)\quad &\text{ if }t\in[0,\tfrac12]\\
\rho'(t-\tfrac12)H_i(\rho(t),x)\quad&\text{ if }t\in[\tfrac12,1],
\end{cases}
\end{equation}
where $\rho:[0,\frac12]\to[0,1]$ is any smooth non-decreasing function which is 0 near 0 and 1 near $\tfrac12$. It is a standard fact that the paths in $\Ham(M,\omega)$ generated by $H_i \# s\eps f$ and $K_{i,\eps}$ are homotopic with fixed end-points.  Since the mean values of these two Hamiltonians are the same, this implies\footnote{By using the well-known fact \cite{schwarz} that on a closed symplectically aspherical manifold, the action spectrum of a contractible normalized Hamiltonian loop is $ \{ 0 \} $. } that, given a fixed point $x$ of $\phi_{H_i}^1\circ\phi_f^{s\eps}$, the action of the associated 1-periodic orbits will be the same for $H_i \# s\eps f$ and $K_{i,\eps}$.  
Now since $x$ does not belong to the support of $s\eps f$, we easily deduce from \eqref{eq:Ham-concat} that this action is exactly that of $H_i$.

It follows that the spectrum of $H_i\# s\eps f$ (which generates  $\phi_{H_i}^1\circ\phi_f^{s\eps}$) remains constant for $s\in[0,1]$. 
Now continuity of spectral invariants and the fact that the spectrum has measure zero, imply that the number $c(a, H_i \# s \eps f)$ remains constant for $s\in[0,1]$. This proves the Claim.
\end{proof}

It follows from the above claim that for $i$ large enough and $\eps$ small enough, $c(\alpha \cap \beta, H_i \# \eps f) = c(\alpha \cap \beta, H_i)$.
On the other hand, the triangle inequality of Proposition \ref{prop:sepc_inv} implies that $c(\alpha \cap \beta, H_i \# \eps f) \leq c(\alpha , H_i) + c(\beta, \eps f)$.  Thus, for all $i$,  $\gamma(\alpha\cap\beta,\alpha;\phi_i) \leq c(\beta, \eps f)$. Taking limit $i\to\infty$, we  obtain $c(\beta, \eps f)\geq 0$. 

We can now conclude our proof as follows. On one hand Proposition \ref{prop:sepc_inv}.5 implies that for sufficiently small $\eps>0$, one has $$ c(\beta,\eps f)=c_{LS}(\beta, \eps f)=c_{LS}([M]\cap\beta, \eps f).$$ On the other hand, Proposition \ref{prop:cLS}.2  implies $$c_{LS}(\beta,\eps f) \leqslant c_{LS}([M],\eps f)=0.$$ Recalling that $c(\beta, \eps f)\geq 0$, we conclude $$ 0 \leqslant c(\beta,\eps f) = c_{LS}(\beta,\eps f) = c_{LS}([M]\cap\beta, \eps f) \leqslant c_{LS}([M],\eps f) = 0 ,$$ and in particular we obtain the equality $ c_{LS}([M]\cap\beta, \eps f) = c_{LS}([M],\eps f) $. By Proposition \ref{prop:cLS}.4 it follows that the zero level set of $f$, that is $\bar{U}$, is homologically non-trivial. This concludes the proof of Theorem \ref{theo:rigid-ham}. 
\end{proof}

\appendix

\section{Fixed and periodic points of $C^0$ generic Hamiltonian homeomorphisms}

Estimating the number of fixed points of generic (in a $C^1$ sense) Hamiltonian diffeomorphisms has been a central problem in symplectic topology over the past 30 years. The construction of Floer homology implies that this number is bounded below by the sum of the Betti numbers of the manifold. It is therefore natural to ask if similar estimates hold for $C^0$-generic Hamiltonian homeomorphisms. It turns out that the situation is dramatically different, as the next proposition shows. 

\begin{prop}\label{prop:generic}
 Let $(M, \omega)$ be any closed symplectic manifold. There exists a residual\footnote{By residual subset we mean a countable intersection of dense open subsets.} subset $\mathcal{U}$ of $\overline{\Ham}(M,\omega)$ such that every element in $\mathcal{U}$ has infinitely many fixed points.
\end{prop}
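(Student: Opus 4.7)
The plan is a Baire category argument on $\overline{\Ham}(M,\omega)$, which is a Baire space in the $C^0$-topology. It suffices to construct, for each integer $N\geq 1$, an open dense subset $\mathcal{O}_N\subset \overline{\Ham}(M,\omega)$ whose elements have at least $N$ fixed points; then $\mathcal{U}=\bigcap_{N\geq 1}\mathcal{O}_N$ is residual, and every element of $\mathcal{U}$ has infinitely many fixed points.

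To force openness I pass to \emph{robust} fixed points: define $\mathcal{O}_N$ as the set of $\phi\in\overline{\Ham}(M,\omega)$ for which there exist $N$ pairwise disjoint closed balls $\overline B_1,\ldots,\overline B_N\subset M$ and some $\delta>0$ such that every $\psi\in\overline{\Ham}(M,\omega)$ with $d_{C^0}(\phi,\psi)<\delta$ has a fixed point in each $B_i$. Openness is immediate from the definition.

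The core of the argument is density. Given $\phi\in\overline{\Ham}(M,\omega)$ and $\eps>0$, I construct $\phi'\in\mathcal{O}_N$ within $\eps$ of $\phi$ as follows. By approximation one may assume $\phi\in\Ham(M,\omega)$. By Arnold's theorem (in its weakest form, guaranteed on any closed symplectic manifold) $\phi$ admits at least one fixed point $p^*$. For any small enough neighborhood $W$ of $p^*$, continuity of $\phi$ together with $\phi(p^*)=p^*$ implies that $\phi$ moves every point of $W$ by less than $\eps$. Inside $W$ I select $N$ pairwise disjoint small closed balls $\overline B_1,\ldots,\overline B_N$ and one point $p_i\in B_i$ for each $i$; let $q_i=\phi(p_i)$, which is within $\eps$ of $p_i$. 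Using a thin symplectic tubular neighborhood of a short arc joining $q_i$ to $p_i$, I build a compactly supported Hamiltonian diffeomorphism $\psi_i$ with $\psi_i(q_i)=p_i$ and $d_{C^0}(\psi_i,\id)<\eps$; by further composition with a small linear Hamiltonian isotopy supported near $p_i$, the linearization $D(\psi_i\circ\phi)(p_i)$ can be freely prescribed so that $\det(D(\psi_i\circ\phi)(p_i)-\mathrm{Id})\neq 0$. Arranging the supports to be pairwise disjoint (by shrinking $W$ and the balls $B_i$ enough), the composition $\phi'=\psi_N\circ\cdots\circ\psi_1\circ\phi$ is a Hamiltonian diffeomorphism $\eps$-close to $\phi$ that fixes each $p_i$ with non-zero local Lefschetz index. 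Since the fixed-point index is stable under $C^0$-small perturbations of continuous maps, every $\psi$ sufficiently $C^0$-close to $\phi'$ has a fixed point in each $B_i$, so $\phi'\in\mathcal{O}_N$.

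The main obstacle is the density step and the local construction of Hamiltonian perturbations creating non-degenerate fixed points. The key simplification is to localize everything near an existing fixed point $p^*$ of $\phi$, where $\phi$ is automatically $C^0$-close to the identity, so that the tubes joining $q_i$ to $p_i$ can be kept short and the corresponding perturbations $\psi_i$ kept $C^0$-small. The combinatorial task of simultaneously arranging $N$ such perturbations with pairwise disjoint supports, while making sure that no perturbation destroys the fixed points created by the previous ones, is then a straightforward exercise in Darboux charts once $W$ and the $B_i$ are chosen small enough.
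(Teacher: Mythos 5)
Your argument is correct and follows essentially the same route as the paper: both prove that, for each $N$, the set of maps with at least $N$ ``robustly persistent'' fixed points is open and dense, where openness comes from $C^0$-stability of nonzero Lefschetz index and density comes from a local Hamiltonian perturbation that manufactures $N$ non-degenerate fixed points near an existing one. The only cosmetic difference is that the paper phrases the open set $\mathcal{U}_N$ as $\bigcup_{\phi\in V_N}U_\phi$ over the dense set $V_N$ of non-degenerate Hamiltonian diffeomorphisms with $\ge N$ fixed points, whereas you define $\mathcal{O}_N$ directly via the robustness condition, and you make explicit the local surgery near one fixed point that the paper only sketches; both reductions are equivalent.
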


In the case of a symplectic surface $(\Sigma, \omega)$, a stronger result holds: generically in $\overline{\Ham}(\Sigma,\omega)$, the set of fixed points is a Cantor set; see \cite{guiheneuf}.
The proof uses tools that are not available in higher dimension, therefore it is not clear to us if this result extends to higher dimension or not.  

The proof of Proposition \ref{prop:generic} will follow easily from the Lefschetz index theory. The Lefschetz index is an integer associated to an isolated fixed point of a continuous map. Of its properties, we will use the following  (see e.g. \cite{Katok-H}, Chapter 2, Section 8.4):
  \begin{itemize}
  \item[(a)] The index of a non-degenerate fixed point of a diffeomorphism is either $1$ or $-1$.
  \item[(b)] Fixed points with non-zero index are $C^0$-stable. More pecisely, if $x_0$ is an isolated fixed point of a continuous map $f$, then there exists a $C^0$-neighborhood of $f$ such that every map in this neighborhood admits a fixed point near $x_0$.
  \end{itemize}
We are now ready for the proof. 

\begin{proof}
Let $\phi$ be a non-degenerate Hamiltonian diffeomorphism which admits at least $N$ fixed points. According to Property (a) above, all its fixed points have non-zero index, hence by Property (b), there exists a $C^0$-open subset $U_\phi\subset \overline{\Ham}(M,\omega)$ such that every element in $U_\phi$ has at least $N$ fixed points.
 
Now, it is well known that non-degenerate Hamiltonian diffeomorphisms are dense in $\Ham(M,\omega)$, hence in $\overline{\Ham}(M,\omega)$. Furthermore, given a non-degenerate Hamiltonian diffeomorphism $\psi$, we can always perform a local modification near one of its fixed points (which are known to exist by Floer homology theory), to construct a new Hamiltonian diffeomorphism, $C^0$-close to $\psi$ and admitting at least $N$ fixed points. In other words, the set $V_N$ of non-degenerate Hamiltonian diffeomorphisms admitting at least $N$ fixed points is dense in $\overline{\Ham}(M,\omega)$.  

It follows immediately that the set $\mathcal{U}_N=\bigcup_{\phi\in V_N}U_\phi$ is a dense open subset and that all its elements have at least $N$ fixed points. Finally, $\mathcal{U}=\bigcap_{N>0}U_N$ is a residual subset all of whose elements have infinitely many fixed points.
\end{proof}

We end the paper with a brief discussion of the Conley conjecture on closed and symplectically aspherical manifolds.  The conjecture, which was proven in \cite{hingston, ginzburg}, states that a Hamiltonian diffeomorphism must have infinitely many periodic points.  When it comes to Hamiltonian homeomorphisms the conjecture was proven on surfaces in \cite{lecalvez06}.  In dimensions four and higher, we are neither able to prove it nor are we able to adapt our counterexample from \cite{BHS} to disprove it.  However, we believe that one should be able to adapt the proof of Proposition \ref{prop:generic} to show that, for a $C^0$--generic Hamiltonian homeomorphism $\phi$, the set $\mathcal{P}_k(\phi)$ consisting of periodic points of period $k$, must be infinite for infinitely many values of $k$.

%
%

\bibliographystyle{abbrv}
\bibliography{biblio}


{\small

\medskip
\noindent Lev Buhovski\\
School of Mathematical Sciences, Tel Aviv University \\
{\it e-mail}: levbuh@post.tau.ac.il
\medskip

\medskip
\noindent Vincent Humili\`ere \\
\noindent Sorbonne Universit\'e, Universit\'e de Paris, CNRS, Institut de Math\'ematiques de Jussieu-Paris Rive Gauche, F-75005 Paris, France.\\
{\it e-mail:} vincent.humiliere@imj-prg.fr
\medskip

\medskip
 \noindent Sobhan Seyfaddini\\
\noindent Sorbonne Universit\'e, Universit\'e de Paris, CNRS, Institut de Math\'ematiques de Jussieu-Paris Rive Gauche, F-75005 Paris, France.\\
{\it e-mail:} sobhan.seyfaddini@imj-prg.fr}

\end{document}